\newlength\shlength
\newcommand\xshlongvec[2][0]{\setlength\shlength{#1pt}%
  \stackengine{-5.6pt}{$#2$}{\smash{$\kern\shlength%
    \stackengine{7.55pt}{$\mathchar"017E$}%
      {\rule{\widthof{$#2$}}{.57pt}\kern.4pt}{O}{r}{F}{F}{L}\kern-\shlength$}}%
      {O}{c}{F}{T}{S}}
\newcommand{\ben}[1]{{\leavevmode\color{green}{#1}}}
\newcommand{\rv}[1]{{\leavevmode\color{black}{#1}} }
\newtheorem{theorem}{Theorem}[section]
\newtheorem{lemma}[theorem]{Lemma}
\newtheorem{proposition}[theorem]{Proposition}
\newtheorem{definition}[theorem]{Definition}
\newtheorem{remark}[theorem]{Remark}
\newtheorem{corollary}[theorem]{Corollary}
\newcommand{\erre}{\mathbb{R}}
\newcommand{\com}{\mathbb{C}}
\renewcommand{\Re}{\operatorname{Re}}
\begin{document}

\title[Delay-dependent and delay-independet Counot duopoly model]{Delay-dependent and delay-independent stability  of Cournot duopoly model with tax evasion and time-delay}                      
%
%



\author[B.A. Itz\'a-Ortiz]{Benjam\'in A.~Itz\'a-Ortiz}

\author[R. Villafuerte-Segura]{Ra\'ul Villafuerte-Segura}

\author[E. Alvarado-Santos]{Eduardo Alvarado-Santos}

\address{Universidad Aut\'onoma del Estado de Hidalgo, Mexico}

%

\begin{abstract}
In this paper a stability analysis for a Cournot duopoly model with tax evasion and time-delay in a continuous-time framework is presented. The mathematical model under consideration follows a gradient dynamics approach, is nonlinear and four-dimensional with  state variables given by the production and declared revenue of each competitor. We prove that both the marginal cost rate and time delay play roles as bifurcation parameters. More precisely, if the marginal cost rate lies in certain closed interval then the equilibrium point is delay-independent stable, otherwise it is  delay-dependent stable and a Hopf bifurcation necessarily occurs.  Some numerical simulations are presented in order to confirm the proposed theoretical results and illustrate the effect of the bifurcation parameters on model stability.
\end{abstract}

%

\keywords{Cournot duopoly; limit cycle; Hopf bifurcation; time delay systems.}

\maketitle

\section{Introduction}\label{Intro}
Bifurcation theory is a mathematical field focused on studying the qualitative variations of the behavior occurring in a family of solutions of a given system of differential equations. Recently, it has become a field  of major involvement in  areas such as engineering, physics, chemistry, economics and biology, among others \cite{ fussmann2000crossing, gori2015continuous, hirsch2012differential}. A bifurcation is said to  occur when an infinitesimal variation in the value of a parameter of a nonlinear system causes a qualitative or topological change on the corresponding solutions of the system. This qualitative change, in many cases, refers to a change on the stability of the fixed point, the appearance or disappearance of a fixed point or the creation or annihilation of a periodic orbit. The parameters that cause these changes are known as bifurcation parameters and the values where the changes occur are known as the bifurcation points. In some dynamical systems, the presence of bifurcations often preludes the unveiling  of chaos, or vice versa \cite{sanjuandinamica}. Chaos is the  denomination of the branch of mathematics that pursues the unravelment of certain types  of dynamical systems exhibiting unpredictable behavior \cite{strogatz:2000}. There are different types of bifurcations that can be present in  a dynamical system. In this paper, the analysis is focused on Hopf bifurcations \cite{poincare1885equilibre}. Although there is a large amount of literature that addresses this topic,  little research has been done on  Hopf bifurcations for time-delay systems.
Bifurcation analysis for time-delay systems is found in the literature on problems from diverse areas such as  economics \cite{neamctu2010deterministic}, finance \cite{jun2001study} and biology \cite{fussmann2000crossing}, among others. 

A large number of topics in theoretical economics can be endorsed with a rigorous mathematical framework by analyzing their corresponding mathematical models. Notably, examples of these analyses are the formalization of imbalance models of economic cycles, the evolutionary models of financial markets, models that focus on the study of the dynamic behavior of firms in world markets, among others, \cite{gori2015continuous}. In recent years, oligopoly models have been receiving increasing attention, both by economists and mathematicians. By economists, since the behavior of these models plays a relevant role in theoretical economics \cite{GR}, and by  mathematicians, as the mathematical models ensuing, despite their complexity,  yield interesting  examples of chaos \cite{A1} and Hopf bifurcations \cite{neamctu2010deterministic}. In this study, a Cournot duopoly model with tax evasion and time-delay  is addressed.

The Cournot duopoly model is a classic example in game theory \cite{4, 8}. A duopoly is a market where two firms sell the same product to a large number of consumers. The first study of a duopoly is due to Antoine Augustin Cournot \cite{cournot1897}, who in 1838 proposed that firms should adjust production levels in such a way that each of them maximizes its profits taking into account the production of the rival firm.
Some studies that address this topic are given in \cite{A3,A2,A1}.  
While duopoly models may be regarded as dynamical systems on two variable, in \cite{GR} a Cournot duopoly model with tax evasion was introduced, rendering the corresponding study of the dynamical system to one with four variables, thus increasing its complexity. In \cite{neamctu2010deterministic}, the author presented 
a Cournot duopoly model with tax evasion  where a Hopf bifurcation occurred with variations of time-delay; however, no condition for the existence of such bifurcation was given. In \cite{pecora2018heterogenous}, an analysis of a heterogenous Cournot duopoly with delay dynamics is given. Here, the mathematical model is two-dimensional with state variables being the  quantities which enter the market from the two firms. Also, stability switching curves and  numerical simulations are provided to illustrate the theoretical findings and to show how the delays affect the dynamic behavior.

In this paper a stability analysis of a Cournot duopoly model with tax evasion an time-delay continuous-time framework is presented. Here, the mathematical model is of  four-dimensional and considers the  quantities which enter the market as well as the declared revenues from each competitor. As a consequence of \rv{our analyisis}, conditions  to determine delay-dependent and delay-independent stability of \rv{the Cournot} model are given. This allows determining restrictions for the existence of limit cycles and Hopf bifurcations. Also, it is shown that\rv{,  notwithstanding the values of many other parameter such as the tax rate and the probabilities of being caught evading taxes, it is} the marginal cost rate of firms \rm{which turns out to be the} decisive factor in determining stability switching in the Cournot duopoly model under delay variations, as well as stability no-switching under any delay variation.

The paper is organized as follows. Some preliminary results concerning to stability of time-delay system and Cournot duopoly nonlinear model with time-delay are presented in Section 2. The stability analysis of Cournot duopoly nonlinear model to obtain the bifurcation parameters, limit cycles and  Hopf bifurcations is proposed in Section 3. In Section 4, the implementation and validation of the previous theoretical results obtained are given. Finally, some concluding remarks are made in Section 5.

\section{Preliminary results }

\subsection{Stability of time delay systems }
In this section, some results concerning the stability of time\ben{-}delay systems are given.

\noindent
Some necessary notation is given first. Consider a time delay nonlinear system of the form
\begin{align}\label{NonlinearSystem}
   \dfrac{d\vec{x}}{dt}= G(\vec{x},\vec{x}\sb\tau),
\end{align}
where $G(\vec{x},\vec{x}\sb\tau)=(g_{1}(\vec { x }(t), \vec { x }_{\tau}(t))\ g_{2}(\vec { x }(t), \vec { x }_{\tau}(t)),\dots,g_{n}(\vec { x }(t),\vec { x }_{\tau}(t)))^{\intercal}$, $\vec{x}=(x_1(t)\ x_2(t)\ z_1(t)\ z_2(t))^{\intercal}$,
$\vec{x}\sb\tau=\vec{x}(t-\tau)=({x_1}\sb\tau(t)\ {x_2}\sb\tau(t)\ {z_1}\sb\tau(t)\ {z_2}\sb\tau(t))^{\intercal}=\left( x_1(t-\tau)\ x_2(t-\tau)\ z_1(t-\tau)\ z_2(t-\tau)\right)^{\intercal}$.

\medskip
\noindent
Now, a equilibrium ${ \vec { x }  }^{ * }={ (x }_{ 1 }^{ * },{ x }_{ 2 }^{ * },\dots ,{ x }_{ n }^{ * })$ is the one that satisfies
$G({ \vec { x }  }^{ * },{ \vec { x }  }_{ \tau  }^{ * })=G({ \vec { x }  }^{ * },{ \vec { x }  }^{ * })=0.$ Thus, the linearization of (\ref{NonlinearSystem}) at the equilibrium point is 
\begin{equation}\label{GralLineal}
\dfrac{d \vec {x}(t)}{dt}=A\vec { x } +B\vec { x }_{\tau},
\end{equation}
where
\begin{equation*}
\begin{array}{ll}
A=\left( \begin{matrix} { D }_{ 1 }{ g }_{ 1 } & { D }_{ 2 }{ g }_{ 1 } & \ldots & { D }_{ n }{ g }_{ 1 } \\ { D }_{ 1 }{ g }_{ 2 } & { D }_{ 2 }{ g }_{ 2 } & \ldots & { D }_{ n }{ g }_{ 2 } \\ \vdots  &  & \ddots  & \vdots  \\ { D }_{ 1 }{ g }_{ n } & { D }_{ 2 }{ g }_{ n } & \dots  & { D }_{ n }{ g }_{ n } \end{matrix} \right),
& \rv{B=\left( \begin{matrix} { D }_{ 1_{\tau} }{ g }_{ 1_{\tau} } & { D }_{ 2_{\tau} }{ g }_{ 1_{\tau} } & \ldots & { D }_{ n_{\tau} }{ g }_{ 1_{\tau} } \\ { D }_{1_{\tau} }{ g }_{ 2_{\tau} } & { D }_{ 2_{\tau} }{ g }_{ 2_{\tau} } & \ldots & { D }_{ n_{\tau} }{ g }_{ 2_{\tau} } \\ \vdots  &  & \ddots  & \vdots  \\ { D }_{ 1_{\tau} }{ g }_{ n_{\tau} } & { D }_{ 2_{\tau} }{ g }_{ n_{\tau} } & \dots  & { D }_{n_{\tau} }{ g }_{ n_{\tau} } \end{matrix} \right) } 
\end{array}
\end{equation*}
are constant systems matrices in $\erre^{n\times n}$, $\tau\in\erre^{+}$ is a delay, $\psi:[-\tau,0]\rightarrow\mathfrak{C}$ is the initial condition, 
$C([-\tau,0],\mathbb{R}^{n})$ is Banach space of continuous vector functions mapping the interval $[-\tau,0]$ into $\erre^{n}$ with the standard uniform norm
$\|\psi\|_{\tau}:=\max_{\theta\in[-\tau,0]}\|\psi(\theta)\|$. For any initial condition the system (\ref{GralLineal}) admits the unique solution $x(t,\psi)$ defined on $[-\tau,\infty]$ and $x_{t}(\psi):=\{x(t+\theta,\psi )\subset x(t,\psi): \theta \in \lbrack -\tau,0] \}$ is the state vector. Here, ${ D }_{ l }{ g }_{ j }=k_l\frac{\partial }{\partial x_l}  \left( \frac{\partial P_j}{\partial x_j}\right)$, and \rv{${ D }_{ l_{\tau} }{ g }_{ j_{\tau} }=k_l\frac{\partial }{\partial x_{l_{\tau}}} \left( \frac{\partial P_j}{\partial x_{j_\tau}}\right)$}; $l,\ j=1,\ldots,n$.

\medskip
\noindent
The above system is know as linear time invariant systems (LTI) with time-delay or LTI system with time-delay, and its quasi-polynomial characteristic is of the form
\begin{equation}\label{GralQuasi}
q(\lambda,\tau)=det(A+e^{-\lambda\tau}B-\lambda I)=0.
\end{equation}

\begin{definition}\cite{cooke1986zeroes}\label{DefHurwitz}
The LTI systems with time-delay (\ref{GralLineal}) is asymptotically stable if all zeros of quasi-polynomial (\ref{GralQuasi}) lie in $\Re\{\lambda\}<0$, $j=1,2,\ldots$.
\end{definition}

\medskip
\noindent
It should be noted that the stability of a system of the form (\ref{NonlinearSystem}) does not always depend on the variations of the parameter $\tau$, that is, the system (\ref{NonlinearSystem}) is stable for arbitrary delay. This condition is known as delay-independent stability criteria. On the other hand, when the stability of the system (\ref{NonlinearSystem}) depends on the variations of $\tau$, this is known as delay-dependent stability criteria.

\medskip
\noindent
Next, the Cournot duopoly model to be studied is introduce.


\subsection{Cournot duopoly mathematical model}

Consider $P_l:\erre^4\rightarrow\erre^+$, $l=1,2$ the profit functions of the two firms given by
\begin{align}\label{ProfitFuction}
P_{l}&\left( x_{1}(t), x_{2}(t), z_{1}(t), z_{2}(t)\right)\notag\\
&= \left(1-q_{l}\right)\left[ x_{l}(t)p(y(t))-C_{l}(x_{l}(t))-\sigma z_{l}(t)\right]+q_{l}\left[(1-\sigma)x_{l}(t)p(y(t))-C_{l}(x_{l}(t))-F(x_{l}(t)p(y(t))-z_{l})\right]\notag\\
 &=(1-q_{l}\sigma)x_{l}(t)p(y(t)) - q_{l}F\left( x_{l}(t)p(y(t))-z_{l}(t)\right)-(1-q_{l})\sigma z_{l}(t) - C_{l}(x_{l}(t)),
\end{align}
where $x_l(t)\in\erre^{+}$, $l=1,2$ are the quantities which enter the market from the two firms, $z_l(t)\in\erre^{+}$, $l=1,2$ are the declared revenues and $y(t):=x_1(t)+x_2(t)\in\erre^{+}$ is a combination of the above variables. In the subsequent, the following is written to reduce notation, $x_l:=x_l(t)$, $z_l=z_l(t)$ and $y:=y(t)$. Also, $p:\erre^+\rightarrow\erre^+$ is the inverse demand function such that is a derivable function with $p'(y)<0$, $\lim _{ y\rightarrow a }{p(y) = 0 }$
and $\lim _{ y\rightarrow 0 }{p(y) =b }$; $a,\ b\in\bar{\erre}^+$, $F(y):\erre^+\rightarrow\erre^+$ is the penalty function such that $F'(y)>0$, $F''(y)>0$, $F(0)=0$,
$C_l:\erre^+\rightarrow\erre^+$, $l=1,2$, is the cost function such that $C_l$ are derivable functions with $C_l'(y)>0$, $C_l''(y)\geq 0$, $\sigma\in[0,1)$ is the government levies an ad valorem tax on each firm’s sales, $q_l\in(0,1)$, $l=1,2$ is the joint probability of being audited and detected, $\sigma z_l$ is the tax bill of firm $l$.

\medskip
\noindent
In (\ref{ProfitFuction}), the first bracketed term  equals the profit of firm $l$ if evasion activities remain undetected, while the second
term represents the profit of firm $l$ in case tax evasion is detected. The model assumes that each firm tends to maximize profits, based on the expectation that its own production and the declared revenue decision will not have an effect on the decisions of its rivals. Therefore, the purpose of the firm is to maximize (\ref{ProfitFuction}) with respect to the output $x_l$ and the declared income $z_l$. Thus, the mathematical optimization problem is given by
\begin{equation}\label{max}
\max_{x_l,z_l}P_l;\quad l=1,2.
\end{equation}
The following proposition, which is similar to  \cite[Proposition~1]{neamctu2010deterministic} except that the functions $F$ and $C_l$ are not yet specified. Compare also with \cite[Proposition~1.1]{itza2012}.
\begin{proposition}\label{PuntosCriticos}
The values of  $x_1\sp\ast$, $x_2\sp\ast$, $z_1\sp\ast$ y $z_2\sp\ast$ which maximizes the profit functions $P_1$ and $P_2$ satisfy the following four equations
\begin{equation}\label{opti}
\begin{split}
\dfrac{\partial P_l}{\partial x_l} &= \left(1-q_{l}\sigma-q_{l}F^{\prime}(x_{l}p(y)-z_{l})\right)\left(p(y)+x_{l}p'(y)\right)-C_{l}'(x_{l})=0,\\
\dfrac{\partial P_l}{\partial z_l} &= -(1-q_{l})\sigma+q_{l}F^{\prime}(x_{l}p(y)-z_{l})=0;\quad l=1,2.
\end{split}
\end{equation}
\end{proposition}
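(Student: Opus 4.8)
The plan is to observe that (\ref{opti}) is nothing more than the system of first-order stationarity conditions for the optimization problem (\ref{max}), and to obtain it by direct differentiation of the profit functions. In the Cournot setting each firm $l$ maximizes its own profit $P_l$ over its own decision variables $x_l$ and $z_l$, treating the rival's output as a fixed parameter; hence a necessary condition for an interior maximizer $(x_1^*, x_2^*, z_1^*, z_2^*)$ is that the four partial derivatives $\partial P_l/\partial x_l$ and $\partial P_l/\partial z_l$ (for $l=1,2$) vanish at that point. It therefore suffices to compute these four derivatives from the condensed expression for $P_l$ in (\ref{ProfitFuction}) and set each equal to zero.

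First I would differentiate with respect to $z_l$, which is the simpler of the two computations: of the four summands comprising $P_l$, only the penalty term $-q_l F(x_l p(y)-z_l)$ and the tax term $-(1-q_l)\sigma z_l$ involve $z_l$. Writing the inner argument as $u := x_l p(y) - z_l$, the chain rule applied to the penalty term yields $q_l F'(x_l p(y)-z_l)$ because $\partial u/\partial z_l = -1$, while the tax term contributes $-(1-q_l)\sigma$; their sum is precisely the second equation of (\ref{opti}).

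The differentiation with respect to $x_l$ is the step demanding the most care, and I expect the chain-rule bookkeeping there to be the only genuine obstacle. The delicate point is that $y=x_1+x_2$ depends on $x_l$, so $\partial y/\partial x_l = 1$ and the factor $\partial(x_l p(y))/\partial x_l = p(y)+x_l p'(y)$ must be carried consistently through every occurrence of $p(y)$. Differentiating the revenue term $(1-q_l\sigma)x_l p(y)$ gives $(1-q_l\sigma)(p(y)+x_l p'(y))$ by the product and chain rules; the penalty term $-q_l F(x_l p(y)-z_l)$ gives $-q_l F'(x_l p(y)-z_l)(p(y)+x_l p'(y))$; the tax term drops out since it is independent of $x_l$; and the cost term $-C_l(x_l)$ contributes $-C_l'(x_l)$. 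Factoring the common quantity $p(y)+x_l p'(y)$ out of the first two contributions produces exactly the first equation of (\ref{opti}), and setting the full expression to zero completes the derivation.

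Finally I would remark that these same four equations characterize the equilibrium of the gradient dynamics, since the equilibrium condition $G(\vec{x}^*,\vec{x}_\tau^*)=0$ forces each $\partial P_l/\partial x_l$ and $\partial P_l/\partial z_l$ to vanish; the standing hypotheses $p'<0$, $F''>0$ and $C_l''\geq 0$ are what ensure the stationary point is in fact a (local) maximizer rather than another type of critical point, though only the necessity of (\ref{opti}) is needed for the statement as phrased.
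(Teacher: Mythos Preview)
Your proposal is correct: you compute the partial derivatives $\partial P_l/\partial x_l$ and $\partial P_l/\partial z_l$ directly from the condensed form of $P_l$ in (\ref{ProfitFuction}), carefully tracking the chain rule through $y=x_1+x_2$, and arrive exactly at the system (\ref{opti}). The paper itself gives no proof for this proposition; it is stated as a standard first-order stationarity condition and attributed to the analogous result in \cite{neamctu2010deterministic}, so your explicit derivation in fact supplies what the paper omits.
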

\medskip
\noindent
For the dynamical model,  the time dependent input variable $x_l(t)$ for each firm  is considered. It will be assumed that the variation of $x_l(t)$ with respect to time is proportional to the marginal profit $\frac{\partial P_l}{\partial x_l}$. Similarly, the declared revenue for each firm is considered to be time dependent, $z_l(t)$ and the adjustment of the amount declared is assumed to be proportional  to the marginal profits $\frac{\partial P_l}{\partial z_l}$. However, the second firm is assumed to be a follower of the first, that is to say, the first firm is assumed to enter the market first followed after a delay $\tau$ by the second firm. 

\medskip
\noindent
Thus, Cournot duopoly nonlinear model with tax evasion and time-delay is  
\begin{align}\label{tau}
G(\vec{x},\vec{x}\sb\tau)&=\left(\frac{dx_{ 1 }}{dt}\quad \frac{dx_{ 2 }}{dt}\quad 
\frac{dz_{ 1 }}{dt}\quad \frac{dz_{ 2 }}{dt}\quad\right)^{\intercal}\\
&=\left({k_1} \dfrac{\partial P_1(\vec{x},\vec{x}\sb\tau)}{ \partial x_1}\quad
k_{2}\dfrac{\partial P_2(\vec{x},\vec{x}\sb\tau)}{ \partial x_2}\quad
k_{3}\dfrac{\partial P_1(\vec{x},\vec{x}\sb\tau)}{ \partial z_1}\quad
k_{4}\dfrac{\partial P_2(\vec{x},\vec{x}\sb\tau)}{ \partial z_2}
 \right)^{\intercal}.\notag
\end{align}
where $k_j$, $j=1,\ldots,4$ are constant. Observe that the above nonlinear model is of the form (\ref{NonlinearSystem}). Moreover, the fixed points of the system (\ref{tau}) is precisely the equilibrium points computed in Proposition~\ref{PuntosCriticos}.

\begin{proposition}
The linearization of systems (\ref{tau}) around the equilibrium (\ref{equilibrium}) is a system of the form (\ref{GralLineal}), 
\begin{equation}\label{GralLineal_duopolio}
\dfrac{d \vec {x}}{dt}=A\vec { x } +B\vec { x }_{\tau},
\end{equation}
where
\begin{equation*}
\begin{array}{ll}
A=\begin{pmatrix} { k }_{ 1 }\frac { { \partial  }^{ 2 }{ P }_{ 1 } }{ \partial { x }_{ 1 }^{ 2 } }  & { k }_{ 1 }\frac { { \partial  }^{ 2 }{ P }_{ 1 } }{ \partial { x }_{ 2 }{ \partial x }_{ 1 } }  & { k }_{ 1 }\frac { { \partial  }^{ 2 }{ P }_{ 1 } }{ \partial { z }_{ 1 }\partial { x }_{ 1 } }  & 0 \\ 0  & { k }_{ 2 }\frac { { \partial  }^{ 2 }{ P }_{ 2 } }{ \partial { x }_{ 2 }^{ 2 } }  & 0 & { k }_{ 2 }\frac { { \partial  }^{ 2 }{ P }_{ 2 } }{ \partial { z }_{ 2 }{ \partial x }_{ 2 } }  \\ { k }_{ 3 }\frac { { \partial  }^{ 2 }{ P }_{ 1 } }{ \partial { x }_{ 1 }{ \partial z }_{ 1 } }  & { k }_{ 3 }\frac { { \partial  }^{ 2 }{ P }_{ 1 } }{ \partial { x }_{ 2 }{ \partial z }_{ 1 } }  & { k }_{ 3 }\frac { { \partial  }^{ 2 }{ P }_{ 1 } }{ \partial { z }_{ 1 }^{ 2 } }  & 0 \\ 0  & { k }_{ 4 }\frac { { \partial  }^{ 2 }{ P }_{ 2 } }{ { \partial x }_{ 2 }\partial { z }_{ 2 } }  & 0 & { k }_{ 4 }\frac { { \partial  }^{ 2 }{ P }_{ 2 } }{ \partial { z }_{ 2 }^{ 2 } }  \end{pmatrix}, &
B=\begin{pmatrix} 0  & 0  & 0  & 0 \\ 
{ k }_{ 2 }\frac { { \partial  }^{ 2 }{ P }_{ 2 } }{ \partial { x }_{ 1\tau  }{ \partial x }_{ 2 } }  & 0  & 0 & 0 \\ 
0  & 0 & 0  & 0 \\ 
{ k }_{ 4 }\frac { { \partial  }^{ 2 }{ P }_{ 2 } }{ \partial { x }_{ 1\tau  }{ \partial z }_{ 2 } }  & 0  & 0 & 0  \end{pmatrix}.
\end{array}
\end{equation*}
and its quasi-polynomial is
\begin{align}\label{quasi}
q(\lambda,\tau)=p_{1}(\lambda)p_{2}(\lambda)-e^{-\lambda \tau}(a\lambda-b)(c\lambda-d),
\end{align}
where $p_{1}(\lambda)=\lambda^2- \alpha_1\lambda + \alpha_0$, $p_{2}(\lambda)=\lambda^2- \beta_1\lambda + \beta_0$, with
\begin{align*}
\alpha_0&=\left( k_1 \dfrac{\partial^2P_1}{\partial x_1^2} k_{3} \dfrac{\partial^2 P_1}{ \partial z_1^2}\right) - \left(k_1 \dfrac{\partial^2P_1}{\partial z_1\partial x_1} k_{3} \dfrac{\partial^2 P_1}{\partial {x_1} \partial z_1}\right),\quad
\alpha_1=\left(k_1 \dfrac{\partial^2P_1}{\partial x_1^2}+ k_{3} \dfrac{\partial^2P_1}{\partial z_1^2}  \right),\\
\beta_0&=\left(k_2 \dfrac{\partial^2P_2}{\partial x_2^2} k_{4} \dfrac{\partial^2 P_2}{ \partial z_2^2}\right)-\left( k_2 \dfrac{\partial^2P_2}{\partial z_2\partial x_2} k_{4} \dfrac{\partial^2 P_2}{\partial {x_2} \partial z_2}\right),\quad 
\beta_1= \left(k_2 \dfrac{\partial^2P_2}{\partial x_2^2}+ k_{4} \dfrac{\partial^2P_2}{\partial z_2^2}\right).\\
\end{align*}
Additionally,  
\begin{align*}
a&={ k }_{ 1 }\frac { \partial^{2} { P }_{ 1 } }{ { \partial x }_{ 2 } { \partial x }_{ 1 }},\quad b=\left( { k }_{ 1 }
\frac { { \partial  }^{ 2 }{ P }_{ 1 } }{ { \partial  }x_{ 2 }{ \partial x }_{ 1 } }  { k }_{ 3 }\frac { { \partial  }^{ 2 }{ P }_{ 1 } }{ { \partial { z }_{ 1 }^{ 2 } } }  \right) -\left( { k }_{ 3 }\frac { { \partial  }^{ 2 }{ P }_{ 1 } }{ { \partial  }x_{ 2 }{ \partial z }_{ 1 } } { k }_{ 1 }\frac { { \partial  }^{ 2 }{ P }_{ 1 } }{ { \partial { z }_{ 1 }{ \partial x }_{ 1 } } }  \right),\\
c&={ k }_{ 2 }\frac { { \partial  }^{ 2 }{ P }_{ 2 } }{ { \partial  }x_{ 1 }{ \partial x }_{ 2} },\quad d=\left( { k }_{ 2 }\frac { { \partial  }^{ 2 }{ P }_{ 2 } }{ { \partial  }x_{ 1 }{ \partial x }_{ 2 } } { k }_{ 4 }\frac { { \partial  }^{ 2 }{ P }_{ 2 } }{ { \partial { z }_{ 2 }^{ 2 } } }  \right) -\left( { k }_{ 4 }\frac { { \partial  }^{ 2 }{ P }_{ 2 } }{ { \partial  }x_{ 1 }{ \partial z }_{ 2 } } { k }_{ 2 }\frac { { \partial  }^{ 2 }{ P }_{ 2 } }{ { \partial { z }_{ 2 }{ \partial x }_{ 2 } } }  \right).
\end{align*}
\end{proposition}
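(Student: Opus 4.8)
The plan is to linearize the gradient system (\ref{tau}) at the equilibrium $\vec{x}^\ast$ of Proposition~\ref{PuntosCriticos} and then evaluate the characteristic determinant defined in (\ref{GralQuasi}). Since each component of $G$ is $k_l$ times a first partial derivative of a profit function, the Jacobian of $G$ is the scaled Hessian of the profits, and its entries are exactly the second-order partials appearing in $A$ and $B$. The one point requiring attention is the delay bookkeeping: firm~$1$'s equations ($\dot{x}_1,\dot{z}_1$) involve only the instantaneous state, whereas firm~$2$ reacts to its rival through the \emph{delayed} output, so in $P_2$ the aggregate quantity is $x_{1\tau}+x_2$. Consequently $\partial P_2/\partial x_2$ and $\partial P_2/\partial z_2$ depend on $x_1$ only via $x_{1\tau}$; differentiating with respect to the instantaneous $x_1$ returns $0$ (placing nothing in $A$), while differentiating with respect to $x_{1\tau}$ produces the two entries $k_2\,\partial^2P_2/\partial x_{1\tau}\partial x_2$ and $k_4\,\partial^2P_2/\partial x_{1\tau}\partial z_2$, which are precisely the nonzero entries in the first column of $B$. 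Because $z_1$ does not enter $P_2$ and $z_2$ does not enter $P_1$, the remaining cross-partials vanish, yielding the sparsity pattern of $A$ and $B$ claimed in (\ref{GralLineal_duopolio}). Evaluating at $\vec{x}^\ast$, where $x_{1\tau}^\ast=x_1^\ast$ so the delayed-variable partials are ordinary numbers, and invoking the first-order conditions (\ref{opti}) completes the identification of the two matrices.

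It then remains to evaluate $q(\lambda,\tau)=\det(A+e^{-\lambda\tau}B-\lambda I)$ and exhibit the factorization (\ref{quasi}). To make the structure transparent I would first permute the state from $(x_1,x_2,z_1,z_2)$ to $(x_1,z_1,x_2,z_2)$; this is a similarity by a permutation matrix and leaves the determinant unchanged. In the new ordering $A+e^{-\lambda\tau}B-\lambda I$ becomes a $2\times2$ block matrix $\left(\begin{smallmatrix} M_1 & C \\ D & M_2 \end{smallmatrix}\right)$, where $M_1$ and $M_2$ are the firm-$1$ and firm-$2$ diagonal $2\times2$ blocks; a direct $2\times2$ computation gives $\det M_1=p_1(\lambda)$ and $\det M_2=p_2(\lambda)$, reproducing the stated $\alpha_0,\alpha_1,\beta_0,\beta_1$. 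Crucially, the coupling blocks are rank one: $C=uv^\top$ and $D=e^{-\lambda\tau}\,wv^\top$ with $v=(1,0)^\top$ and $u$, $w$ the single nonzero columns of $C$ and of $e^{\lambda\tau}D$, because each firm couples to the other only through the aggregate-quantity channel (so the columns of $C$ and $D$ belonging to the $z$-variables vanish).

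Next I would apply the Schur-complement identity $\det\left(\begin{smallmatrix} M_1 & C \\ D & M_2\end{smallmatrix}\right)=\det(M_1)\det(M_2-DM_1^{-1}C)$ together with the matrix-determinant lemma. The rank-one form collapses $DM_1^{-1}C$ to $e^{-\lambda\tau}(v^\top M_1^{-1}u)\,wv^\top$, and one more application of the lemma gives
\begin{equation*}
q(\lambda,\tau)=p_1(\lambda)p_2(\lambda)-e^{-\lambda\tau}\bigl(v^\top\operatorname{adj}(M_1)\,u\bigr)\bigl(v^\top\operatorname{adj}(M_2)\,w\bigr),
\end{equation*}
where I have used $\det(M_1)\,M_1^{-1}=\operatorname{adj}(M_1)$ to clear the inverse. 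Evaluating the two scalar contractions against the explicit $2\times2$ adjugates yields $v^\top\operatorname{adj}(M_1)u=-(a\lambda-b)$ and $v^\top\operatorname{adj}(M_2)w=-(c\lambda-d)$, with $a,b,c,d$ exactly the coefficients displayed in the statement; the two sign changes cancel and (\ref{quasi}) follows.

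The computation is essentially mechanical, so the main obstacle is bookkeeping rather than ideas: keeping the delay attached to the correct entries (ensuring $e^{-\lambda\tau}$ multiplies only the firm-$1$-into-firm-$2$ coupling, hence only the factor arising from $D$), and tracking the signs in the two adjugate contractions so that the final product $(a\lambda-b)(c\lambda-d)$ carries a single overall minus sign. One caveat in the block approach is that the Schur complement presumes $M_1$ invertible, i.e.\ $p_1(\lambda)\neq0$; since both sides of the asserted identity are entire in $\lambda$ (polynomials in $\lambda$ and in $e^{-\lambda\tau}$), equality on the dense set $\{p_1(\lambda)\neq0\}$ extends to all $\lambda$ by continuity. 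Alternatively, one may bypass this caveat entirely by expanding the $4\times4$ determinant directly by cofactors along the sparse $z$-columns, which is longer but requires no invertibility hypothesis.
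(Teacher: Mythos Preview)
Your argument is correct. The paper states this proposition without proof, treating it as a routine computation, so there is no ``paper's proof'' to compare against; your write-up supplies what the paper omits. The permutation to the ordering $(x_1,z_1,x_2,z_2)$, the identification of the rank-one coupling blocks $C=uv^\top$ and $D=e^{-\lambda\tau}wv^\top$ with $v=(1,0)^\top$, and the Schur--complement / matrix-determinant-lemma reduction all check out, and your adjugate contractions do yield $-(a\lambda-b)$ and $-(c\lambda-d)$ with the signs cancelling as you say. The continuity patch for the locus $p_1(\lambda)=0$ is the right way to close the argument; alternatively, as you note, a direct cofactor expansion along the last column (the $z_2$-column, which has only two nonzero entries) gives the same factorization with no invertibility hypothesis and only slightly more bookkeeping.
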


\rv{Next, stability of the system (\ref{GralLineal_duopolio}) is studied.}


\section{Stability analysis of the Cournot duopoly model with time-delay}
In this section, an analysis to determine delay-independent and delay-dependent
stability conditions of the Cournot duopoly model with time-delay is presented.

\rv{The stability of system (\ref{GralLineal_duopolio}) is completely determined by the location of the roots of the corresponding characteristic quasi-polynomial. One method to analyze the stability of a quasi-polynomial is the D-partition method proposed by Neimark in \cite{neimark1973d}. What this method proposes is the  study of the space of crossover frequencies $i\omega$-crossing delays. Below, this method is then applied to  quasi-polynomial (\ref{quasi}).
In addition, we will asume the the system is initially stable, that is, stable when $\tau=0$.
\medskip

\noindent
A stable quasi-polynomial loses  stability if some of its roots cross to the open right-half  of  the complex plane. Clearly, the above occurs when the roots cross the imaginary axis. For this, there are two possible cases,  the first is a crossing window on the imaginary axis, $\lambda=\pm i\omega$, where $0\neq\omega\in\erre^+$, the second is a crossing window on the origin $\lambda=0$. In both cases, $\lambda$ must be solution of quasi-polynomial. In general, the crossing window occur under variations of the parameters of a system or quasi-polynomial. A particular case and of great interest to the scientific community since it is closely related to bifurcation theory, it is to find the crossing windows when delay $\tau$ varies. On the one hand, when the stability of a system depends on the value of $\tau$, then it is said that the system is delay-dependent stable, and there will be ranges of values of $\tau$  for which the system is stable and ranges for which it is unstable. On the other hand, when the system is delay-independent stable, then the system is stable for all non-negative values of $\tau$. Next, an analysis of the quasi-polynomial (9) using the mentioned above is performed.

\medskip
\noindent
Consider the change of variable $\lambda=0$ in the quasi-polynomial (\ref{quasi})
\begin{align*}
q(0,\tau)&=\alpha_0\beta_0-bd.
\end{align*}
Clearly, the previous equation does not contribute much about the stabilized analysis of (\ref{quasi}), whence, efforts will be focused when $\lambda= i\omega_0$, where $0\neq\omega_0\in\erre^+$ is solution of polynomial $P(i\omega_0)=0$ given in (\ref{quasi}).
To obtain stability conditions, it is enough to study only roots with a positive imaginary part, so $\lambda=- i\omega_0$ will not be used.
}

\medskip
\noindent
Now, consider the change of variable $\lambda=i\omega$ in the quasi-polynomial (\ref{quasi})
\begin{align}\label{quasi_iw}
q(i\omega,\tau)&=p_{1}(i\omega)p_{2}(i\omega)-e^{-i\omega \tau}(i\omega a-b)(i\omega c-d)\notag\\ &=p_{1}(i\omega)p_{2}(i\omega)-\left(\cos(\omega\tau)-i\sin(\omega\tau) \right)(a\lambda-b)(c\lambda-d)=0,
\end{align}
Note that \rv{$q(i\omega,\tau)=0$ iff  $\emph{Re}\{q(i\omega,\tau) \}=0$ and $\emph{Im}\{q(i\omega,\tau) \}=0$, where
\begin{align*}
\emph{Re}\{q(i\omega,\tau) \} &= \Phi(\omega)+(ac \omega^{2}-db)\cos(\omega \tau)+\omega(ad+bc)\sin(\omega \tau)=0,\\
\emph{Im}\{q(i\omega,\tau) \} &= \Theta(\omega)+\omega(ad  +bc )\cos(\omega \tau)+(db - ac \omega^{2})\sin(\omega \tau)=0,
\end{align*}
with 
\begin{align*}
\Phi(\omega)=&\emph{Re}\{p_{1}(i\omega)p_{2}(i\omega) \}
={\omega}^{4}- \left( \alpha_{{1}}\beta_{{1}}+\alpha_{{0}}+\beta_{{0}} \right) {\omega}^{2}+\alpha_{{0}}\beta_{{0}},\\
\Theta(\omega)=&\emph{Im}\{p_{1}(i\omega)p_{2}(i\omega) \}
= \omega^{3} \left( \alpha_{{1}}+\beta_{{1}} \right) - \left( \alpha_{{0}}\beta_{{1}}+\alpha_{{1}}\beta_{{0}} \right) \omega.
\end{align*}
}
\medskip
\noindent
In other words, $q(i\omega,\tau)=0$ if
\begin{equation*}
\left[ \begin{matrix} \cos(\omega \tau ) \\ \sin(\omega \tau ) \end{matrix} \right] ={ \begin{bmatrix} (ac{ \omega  }^{ 2 }-db) & \omega(ad +bc ) \\ \omega(ad +bc ) & (db - ac \omega^{2}) \end{bmatrix} }^{ -1 }\left[ \begin{matrix} -\Phi(\omega) \\ -\Theta(\omega) \end{matrix} \right] 
\end{equation*}
or
\begin{equation*}
\cos(\omega \tau) = \frac { -\Phi(\omega)\left( ac{ \omega  }^{ 2 }-bd \right) -\Theta(\omega) \left( ad+bc \right)\omega  }{ (c^2\omega^2+d^2)(a^2\omega^2+b^2) },\quad  
\sin(\omega \tau) = { \frac { -\Phi(\omega)\,\left( ad+bc \right) \omega\, +\Theta(\omega)\left( ac{ \omega  }^{ 2 }-bd \right)  }{(c^2\omega^2+d^2)(a^2\omega^2+b^2) }  }.
\end{equation*}
Now, using $\sin^2(\omega \tau)+\cos^2(\omega \tau)=1$, the above is true if
\begin{align}\label{Polynomial_omega}
P(\omega)&=N_{1}^{2}(\omega) +N_{2}^{2}(\omega)-Q^{2}(\omega)\notag\\
&=a_{12}\omega^{12} + a_{10}\omega^{10} + a_{8}\omega^{8}+ a_{6}\omega^{6}+a_{4}\omega^{4}+a_{2}\omega^{2} +a_{0}=0
\end{align}
with $N_1(\omega)= -\Phi(\omega)\left( ac{ \omega  }^{ 2 }-bd \right) -\Theta(\omega)\omega \, \left( ad+bc \right)$,
$N_2(\omega)=-\Phi(\omega)\,\left( ad+bc \right)\omega +\Theta(\omega) \left( ac{ \omega  }^{ 2 }-bd \right)$ and $Q(\omega)=\left( {c}^{2}{\omega}^{2}+{d}^{2} \right)  \left( {a}^{2}{\omega}^{2}+{b}^{2} \right)$. Also,
\begin{align*}
 a_{12}=&\, a^{2}\, c^{2},\\
 a_{10}=&\left(  \left( {\alpha_{{1}}}^{2}+{\beta_{{1}}}^{2}-2 \left(\,\alpha_{{0}}+\,\beta_{{0}} \right) \right) {c}^{2}+{d}^{2} \right) {a}^{2}+{b}^{2}{c}^{2},\\
 a_{8} =& \left( {\alpha_{{1}}}^{2}+{\beta_{{1}}}^{2}  -    2\, \left( \alpha_{{0}}+ \beta_{{0}} \right) \right)  \left( {a}^{2}{d}^{2}+{b}^{2}{c}^{2} \right)+ \left( {\alpha_{{0}}}^{2}+{\beta_{{0}}}^{2}+ \left( 2\,
\alpha_{{0}}-{\alpha_{{1}}}^{2} \right)  \left( 2\,\beta_{{0}}-{\beta_{{1}}}^{2}
 \right)  \right) {a}^{2}{c}^{2}+{b}^{2}{d}^{2}-{a}^{4}{c}^{4},\\
a_{6} =& \left( {a}^{2}{d}^{2}+{b}^{2}{c}^{2} \right)  \left( {\alpha_{{0}}}^{
2}+{\beta_{{0}}}^{2}+ \left(2\,\alpha_{{0}} -{\alpha_{{1}}}^{2}
 \right)  \left( 2\,\beta_{{0}}-{\beta_{{1}}}^{2} \right)  \right)+ \left( {\alpha_{{1}}}^{2}+{\beta_{{1}}}^{2}  -2 \left( \alpha_{{0}}+ \,\beta_{{0}} \right)\right) {b}^{2}{d}^{2}-2\,{a}^{4}{c}^{2}{d}^{2}\\
&-2\,{b}^{2}{c}^{4}{a}^{2}+ \left( {\alpha_{{0}}}^{2}{\beta_{{1}}}^{2}+{
\alpha_{{1}}}^{2}{\beta_{{0}}}^{2} -2 \left( {\alpha_{{0}}}^{2}\beta_{{0}}+ \,\alpha_{{0}}{\beta_{{0}}}^{2} \right)  \right)          {c}^{2}{a}^{2}\\
 a_{4} =& \left( {\alpha_{{0}}}^{2}{\beta_{{0}}}^{2}-4\,{b}^{2}{d}^{2}
 \right) {c}^{2}{a}^{2}+ \left( {\alpha_{{0}}}^{2}+{\beta_{{0}}}^{2}+
 \left( {\alpha_{{1}}}^{2}-2\,\alpha_{{0}} \right)  \left( {\beta_{{1}
}}^{2}-2\,\beta_{{0}} \right)  \right) {d}^{2}{b}^{2},\\
&+ \left( {\alpha_
{{0}}}^{2} \left( {\beta_{{1}}}^{2}-2\,\beta_{{0}} \right) + \left( {
\alpha_{{1}}}^{2}-2\,\alpha_{{0}} \right) {\beta_{{0}}}^{2} \right) 
 \left( {a}^{2}{d}^{2}+{b}^{2}{c}^{2} \right) -{a}^{4}{d}^{4}-{b}^{4}{
c}^{4},\\
a_{2} =&  \left( {\alpha_{{0}}}^{2} \left( {\beta_{{1}}}^{2}-2\,\beta_{{0}}
 \right) + \left( {\alpha_{{1}}}^{2}-2\,\alpha_{{0}} \right) {\beta_{{0
}}}^{2} \right) {b}^{2}{d}^{2}- \left( {a}^{2}{d}^{2}+{b}^{2}{c}^{2}
 \right)  \left( 2\,{b}^{2}{d}^{2}-{\alpha_{{0}}}^{2}{\beta_{{0}}}^{2}
 \right),\\
 a_{0} =&\, {\alpha _{0}}^{2}{\beta _{0}}^{2}{b}^{2}{d}^{2}-{b}^{4}{d}^{4}.
\end{align*}
Thus, the quasi-polynomial (\ref{quasi}) have dominant roots $\lambda_0=\pm i\omega_0$, if there is $\omega_0$ solution of the polynomial $P(\omega_0)$ given in (\ref{Polynomial_omega}). Moreover, the delay for which the above occurs is
\begin{equation}\label{tau0}
{ \tau  }_{ 0 }=\frac { 1 }{ { \omega_0  } } { \tan }^{ -1 }\left(\frac { N_2(\omega_0) }{ N_1(\omega_0) } \right) + \dfrac{n\pi}{\omega_0};\ n=0,\pm 1, \pm2, \ldots
\end{equation}
The following proposition is  a reformulation of \cite[Theorem~6]{neamctu2010deterministic}. It will be useful for the applications later is this paper.

\begin{proposition}\label{theorem_gral}
Consider the Cournot duopoly linear system with time-delay (\ref{GralLineal_duopolio}) stable for $\tau=0$ and its corresponding quasi-polynomial  (\ref{quasi}). Then, the system (\ref{GralLineal_duopolio}) is delay-independent stable if the polynomial $P(\omega)$ given in (\ref{Polynomial_omega}) has no nonzero real roots.

\medskip
\noindent
On the other hand, if there is $0\neq\omega_0\in\erre^+$ such that $P(\omega_0)=0$, then the system (\ref{GralLineal_duopolio}) is delay-dependent stable. Moreover, the Cournot duopoly nonlinear model with time-delay (\ref{tau}) have a Hopf bifurcation occurs at $\tau=\tau_0$ if 
\begin{equation}\label{sign}
\text{sign}\Bigl\{\text{Re}\Bigl\{ \frac{\partial \lambda}{\partial\tau}\Big{|}_{\lambda=i\omega_0}\Bigl\} \Bigl\}=\text{sign}\Bigl\{ \frac{\partial\, \text{Re}\left\{\lambda\right\}}{\partial\tau}  \Big{|}_{\lambda=i\omega_0} \Bigr\}>0.
\end{equation}
Here, $\tau_0$ is given in (\ref{tau0}).
\end{proposition}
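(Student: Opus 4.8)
The plan is to use the fact, recalled in the discussion preceding the statement, that a stable quasi-polynomial can lose stability only when one of its roots crosses the imaginary axis, so that the whole question reduces to deciding for which $\tau$ the characteristic quasi-polynomial (\ref{quasi}) admits a root on the imaginary axis, and then to convert that condition into the single real polynomial equation $P(\omega) = 0$ of (\ref{Polynomial_omega}).

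First I would record that, by hypothesis, the system (\ref{GralLineal_duopolio}) is stable at $\tau = 0$, so in the sense of Definition \ref{DefHurwitz} every root of $q(\lambda, 0)$ has negative real part. Because the roots depend continuously on $\tau$, the number of roots in $\Re\{\lambda\} > 0$ can change only when a root passes through the imaginary axis, so it suffices to find all $\tau \geq 0$ for which $q(i\omega, \tau) = 0$ for some real $\omega$. The value $\omega = 0$ is dispatched at once: $q(0,\tau) = \alpha_0\beta_0 - bd$ does not depend on $\tau$, and since $\lambda = 0$ is not a root at $\tau = 0$ it is a root for no $\tau$. For $\omega \neq 0$, substituting $\lambda = i\omega$ and separating into $\Re\{q(i\omega,\tau)\} = 0$ and $\Im\{q(i\omega,\tau)\} = 0$ gives a linear system for $\cos(\omega\tau)$ and $\sin(\omega\tau)$; solving it and imposing $\cos^2(\omega\tau) + \sin^2(\omega\tau) = 1$ eliminates $\tau$ and yields exactly $P(\omega) = 0$. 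Hence $\pm i\omega$ with $\omega \neq 0$ is a characteristic root for some admissible $\tau$ if and only if $\omega$ is a nonzero real root of $P$.

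The first two assertions follow from this equivalence. If $P$ has no nonzero real root, no characteristic root ever reaches the imaginary axis as $\tau$ grows, so by continuity the count of right-half-plane roots stays at zero and the system is stable for every $\tau \geq 0$, i.e. delay-independent stable. If instead $0 \neq \omega_0 \in \erre^+$ satisfies $P(\omega_0) = 0$, back-substitution gives compatible values of $\cos(\omega_0\tau)$ and $\sin(\omega_0\tau)$, and inverting produces the family $\tau_0$ of (\ref{tau0}); at each such $\tau_0$ the conjugate pair $\pm i\omega_0$ lies on the imaginary axis, so stability is genuinely tied to the value of $\tau$ and the system is delay-dependent stable.

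For the Hopf bifurcation I would verify the hypotheses of the Hopf bifurcation theorem for delay systems at $(\lambda,\tau) = (i\omega_0, \tau_0)$. The purely imaginary pair $\pm i\omega_0$ has just been produced, and the transversality hypothesis is supplied directly as the sign condition (\ref{sign}), which by construction equals $\Re\{\partial\lambda/\partial\tau\}|_{\lambda=i\omega_0} > 0$; thus $\Re\{\lambda\}$ strictly increases through $0$ and a conjugate pair crosses from the left into the right half-plane as $\tau$ passes $\tau_0$, whence the Hopf theorem furnishes the bifurcating periodic orbit of (\ref{tau}). The steps I expect to be most delicate are, on the one hand, the tedious elimination producing $P(\omega)$ together with its listed coefficients $a_{12},\ldots,a_0$, and, on the other, verifying the remaining non-degeneracy requirements of the Hopf theorem, namely that $i\omega_0$ is a simple root of $q$ and that no other root sits at a resonant purely imaginary value, since transversality itself is already handed to us by (\ref{sign}).
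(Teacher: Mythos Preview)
Your proposal is correct and follows essentially the same approach as the paper: both argue via continuity of roots in $\tau$, reduce the question of an imaginary-axis crossing to the real polynomial equation $P(\omega)=0$, and then invoke the transversality condition (\ref{sign}) for the Hopf bifurcation. If anything, your treatment is slightly more careful, since you explicitly dispose of the $\lambda=0$ case and flag the simplicity and non-resonance hypotheses of the Hopf theorem, whereas the paper absorbs these into a citation of \cite{cooke1986zeroes,michiels2007stability}.
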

\begin{proof}
It is well-know that the stability of Cournot duopoly linear model (\ref{GralLineal_duopolio}) depends on the location of the roots of the polynomial in the complex plane. Suppose that the system (\ref{GralLineal_duopolio}) is stable for $\tau=0$, i.e. all  roots of (\ref{quasi})  lie in the open left-half of complex plane. 

\medskip
\noindent
By taking $\tau$ as a parameter and the continuous movement of the roots under variation of $\tau$. The quasi-polynomial (\ref{quasi}) lose stability if some of its roots cross to open right-half of complex plane. Clearly, the above occurs when the roots cross the imaginary axis, for which, there must first be a crossing window on the imaginary axis, $\lambda_0=\pm i\omega_0$, where $0\neq\omega_0\in\erre^+$ is solution of polynomial $P(\omega_0)=0$ given in (\ref{Polynomial_omega}). Thus,  
the crossing window $\lambda_0$ is guaranteed and is occurs when $\tau=\tau_0$ and the
nonlinear system (\ref{tau}) have a bifurcation occurs at $\tau=\tau_0$.
Second, suppose that suppose that the above is true, i.e. the  $P(\omega_0)=0$ has at least one positive root and this is simple. As $\tau$  increases, stability switches may occur when (\ref{sign}) is met. Therefore, the system (\ref{GralLineal_duopolio})
is delay-dependent stable, see \cite{cooke1986zeroes,michiels2007stability}.

\medskip
\noindent
On the other hand, if there is not a positive root $\omega_0$ such that the polynomial $P(\omega_0)=0$, then there is no crossing window $\lambda_0=\pm i\omega_0$. Therefore, if the system (\ref{GralLineal_duopolio}) is stable at $\tau=0$ it remain stable for all $\tau\geq0$.

\end{proof}


\medskip
\noindent
Note that the above results are for any demand function $p(y)$, penalty function $F(y)$ and cost functions $C_l(y)$. Throughout the rest of the document it is assumed that the previous functions are defined particularly to obtain specific and detailed results.  

\medskip
\noindent
Consider a Cournot duopoly nonlinear model with time-delay of the form (\ref{tau}), the linearization (\ref{GralLineal_duopolio}), the quasi-polynomial (\ref{quasi}) and the polynomial (\ref{Polynomial_omega}). Also, 

\medskip
\noindent
\textbf{Assumption A:} {Let} $p(y)=1/y$, \rv{$F(u)=\frac{1}{2}s\sigma u^2$} and $C_l(x_l)=c_lx_l$,  with $s>0$, $c_l>0$, $l=1,2$, are constants, $y=x_1+x_2$ and \rv{$ u=x_{1}\, p \left(y\right) - z_{1}$}.

\medskip
\noindent
Thus, the profit function of the first firm is 
   \begin{align}\label{P1Retardo} 
P_{1} &( x_{1}, x_{2}, z_{1}, z_{2},{x_{1}}\sb\tau, {x_{2}}\sb\tau, {z_{1}}\sb\tau, {z_{2}}\sb\tau )    = \left(1 - q_1\right) \big(x_{1}\, p (x_1+x_2) - C_{1} (x_{1}) - \sigma z_{1}\bigr)   
 \notag  \\ 
& + \, q_1 \, \biggl(\left(1 - \sigma\right) x_{1}\, p (x_1+x_2) - C_{1} (x_{1})- F \bigl( x_{1}\, p \left(x_1+x_2\right) - z_{1}\bigr)\biggr).  
\end{align}
The revenue of the second firm is then 
  \[
  x_2\,p\left({x_1}\sb\tau+x_2 \right):= x_2(t)\, p\left(x_1(t-\tau)+x_2(t) \right).
  \]
Therefore, the profit function $P_2$ is given by 
\begin{align}\label{P2Retardo}
P_{2} &\left(x_{1}, x_{2}, z_{1}, z_{2},{x_{1}}\sb\tau, {x_{2}}\sb\tau, {z_{1}}\sb\tau, {z_{2}}\sb\tau\right)= \left(1 - q_2\right) \bigl(x_{2}\, p \left({x_1}\sb\tau+x_2\right) - C_{2} (x_{2}) - \sigma z_{2}\bigr)  \notag\\
&+ \, q_2 \, \biggl(\left(1 - \sigma\right) x_{2}\, p \left({x_1}\sb\tau+x_2\right) - C_{2} (x_{2})- F \bigl( x_{2}\, p \left({x_1}\sb\tau+x_2\right) - z_{2} \bigr)\biggr).
\end{align}

\medskip
\noindent
The four-dimensional Cournot duopoly nonlinear model with tax evasion and time-delay  under consideration, follows a  gradient dynamic approach, that is  
\begin{equation}\label{tau_particular}
\begin{split}
\frac{dx_{ 1 }}{dt} \ ={ k }_{ 1 }\frac { \partial { P }_{ 1 }(\vec{x},\vec{x}\sb\tau) }{ \partial { x }_{ 1 } }=&{ k }_{ 1 }\biggl( \left[ 1-{ q }_{ 1 }\sigma -{ q }_{ 1 }{ F }^{ \prime  }({ x }_{ 1 }p({ x }_{ 1 }+{ x }_{ 2 })-{ z }_{ 1 } \right] \left[ p({ x }_{ 1 }+{ x }_{ 2 })+{ x }_{ 1 }{ p }^{ \prime  }({ x }_{ 1 }+{ x }_{ 2 }) \right] -\rv{C^{ \prime  }_{ 1 }}({ x }_{ 1 }) \biggr), \\
\frac{dx_{ 2 }}{dt}\ ={ k }_{ 2 }\frac { \partial { P }_{ 2 }(\vec{x},\vec{x}\sb\tau) }{ \partial { x }_{ 2 } } =&{ k }_{ 2 }\biggl( \left[ 1-{ q }_{ 2 }\sigma -{ q }_{ 2 }{ F }^{ \prime  }({ x }_{ 2 }p({ x }_{ 1 \tau}+{ x }_{ 2 })-{ z }_{ 2 } \right]\left[ p({ x }_{ 1 \tau }+{ x }_{ 2 })+{ x }_{ 2 }{p }^{ \prime  }({ x }_{ 1 }+{ x }_{ 2 }) \right] -\rv{ C^{ \prime  }_{ 2 }}({ x }_{ 2 }) \biggr), \\ 
\frac{dz_{ 1 }}{dt}\ ={ k }_{ 3 }\frac { \partial { P }_{ 1 }(\vec{x},\vec{x}\sb\tau) }{ \partial { z }_{ 1 } } =&{ k }_{ 3 }\left[ -(1-{ q }_{ 1 })\sigma +{ q }_{ 1 }{ F }^{ \prime  }({ x }_{ 1 }p({ x }_{ 1 }+{ x }_{ 2 })-{ z }_{ 1 } \right], \\ 
\frac{dz_{ 2 }}{dt}\ ={ k }_{ 4 }\frac { \partial { P }_{ 2 } (\vec{x},\vec{x}\sb\tau)}{ \partial { z }_{ 2 } } =&{ k }_{ 4 }\left[ -(1-{ q }_{ 2 })\sigma +{ q }_{ 2 }{ F }^{ \prime  }({ x }_{ 2 }p({ x }_{ 1 \tau}+{ x }_{ 2 })-{ z }_{ 2 } \right].
\end{split}
\end{equation}

\rv{
In the following proposition, we compute the equilibrium point  of the system~(\ref{tau_particular}). It is a slight generalization of \cite[Proposition~4]{neamctu2010deterministic}.
}%
\begin{proposition}\label{PropEquilibrium}
Consider the Cournot duopoly nonlinear model with tax evasion and time-delay (\ref{tau_particular}) with Assumption A. Then, the equilibrium point of system (\ref{tau_particular}) is
\begin{equation}\label{equilibrium}
x_1^*=\dfrac{1-\sigma}{(c_1+c_2)^2}c_2,\quad x_2^*=\dfrac{1-\sigma}{(c_1+c_2)^2}c_1,\quad
z_1^*=\dfrac{c_2}{c_1+c_2}-\dfrac{1-q_1}{sq_1},\quad z_2^*=\dfrac{c_1}{c_1+c_2}-\dfrac{1-q_2}{sq_2}.
\end{equation}
\end{proposition}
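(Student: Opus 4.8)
The plan is to exploit that at an equilibrium all time derivatives vanish and, because the state is stationary, the delayed variables coincide with the undelayed ones, $x_{1\tau}=x_1$. Hence the four equilibrium conditions are exactly the stationarity equations (\ref{opti}) specialized to Assumption~A, where $p(y)=1/y$, $p'(y)=-1/y^2$, $F'(u)=s\sigma u$ and $C_l'(x_l)=c_l$. I would begin with the two $z$-equations $\partial P_l/\partial z_l=0$, that is $-(1-q_l)\sigma+q_l F'(x_l p(y)-z_l)=0$. Solving $s\sigma\bigl(x_l p(y)-z_l\bigr)=(1-q_l)\sigma/q_l$ gives the clean relation
\[
x_l\, p(y)-z_l=\frac{1-q_l}{s q_l},\qquad l=1,2,
\]
which already isolates $z_l$ once $x_l$ and $y$ are known.

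Next I would feed this into the two $x$-equations. The decisive simplification is that the first bracket of $\partial P_l/\partial x_l$ collapses: using $q_l F'(x_l p(y)-z_l)=(1-q_l)\sigma$ from the relation above,
\[
1-q_l\sigma-q_l F'(x_l p(y)-z_l)=1-q_l\sigma-(1-q_l)\sigma=1-\sigma .
\]
Thus $\partial P_l/\partial x_l=0$ reduces to $(1-\sigma)\bigl(p(y)+x_l p'(y)\bigr)=c_l$, and with $p(y)=1/y$ this is $(1-\sigma)(y-x_l)/y^2=c_l$. Since $y-x_1=x_2$ and $y-x_2=x_1$, I obtain the pair of reaction equations
\[
(1-\sigma)\,x_2=c_1 y^2,\qquad (1-\sigma)\,x_1=c_2 y^2 .
\]

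Finally I would solve this algebraic system. Dividing the two equations gives $x_2/x_1=c_1/c_2$, hence $y=x_1+x_2=x_1(c_1+c_2)/c_2$ and $x_1=\tfrac{c_2}{c_1+c_2}\,y$. Substituting into $(1-\sigma)x_1=c_2 y^2$ and cancelling $c_2 y$ (legitimate since a positive equilibrium has $y\neq0$) yields $y=(1-\sigma)/(c_1+c_2)$; back-substitution then produces $x_1^*,x_2^*$ as in (\ref{equilibrium}), and plugging $x_l^*/y=c_{3-l}/(c_1+c_2)$ into the $z$-relation above gives $z_1^*,z_2^*$. The computation is entirely elementary, so there is no genuine analytic obstacle. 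The one point worth flagging — and the conceptual content of the proposition — is the collapse of the tax/penalty bracket to $1-\sigma$, which renders the production equilibrium $(x_1^*,x_2^*)$ independent of the audit probabilities $q_l$ and the penalty slope $s$, so that these parameters enter only through the declared revenues $z_l^*$. I would also record the tacit nondegeneracy assumption $c_1+c_2>0$, needed both to divide and to guarantee a well-defined (and, for the $x_l^*$, positive) equilibrium.
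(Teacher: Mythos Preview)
Your proposal is correct and follows essentially the same approach as the paper: use the $z_l$-equation to reduce the bracket $1-q_l\sigma-q_lF'(\cdot)$ to $1-\sigma$, then solve the resulting pair $(1-\sigma)\bigl(p(y)+x_lp'(y)\bigr)=c_l$ for $x_1^*,x_2^*$ and back-substitute for $z_l^*$. The only cosmetic difference is that the paper adds the two $x$-equations to extract $y=(1-\sigma)/(c_1+c_2)$ whereas you divide them to obtain the ratio $x_2/x_1=c_1/c_2$ first; both are equivalent elementary manipulations.
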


\begin{proof}
\rv{The equilibrium point of system (\ref{tau_particular}) satisfy that
\begin{align}
0=&\left[ 1-{ q }_{ l }\sigma -{ q }_{ l }{ F }^{ \prime  }(u) \right] \left[ p(y)+{ x }_{ 1 }{ p }^{ \prime  }(y) \right] -C^{ \prime  }_{ l }({ x }_{ 1 })\label{Ec1},\\
0=&-(1-{ q }_{ l })\sigma +{ q }_{ l }{ F }^{ \prime  }(u).\label{Ec2}
\end{align}
where $C_l(x_l)=c_lx_l$, $p(y)=1/y$, $F(u)=\frac{1}{2}s\sigma u^2$, $y=x_1+x_2$ and $u=x_{l}p(y)-z_{l}$, $l=1,2$. Note that, ${ x }_{l}={ x }_{ l \tau}$, $l=1,2$.  From (\ref{Ec2}) follows that ${ F }^{ \prime  }(u)= \frac{(1-{ q }_{ l })\sigma}{{ q }_{ l }}$, and 
substituting the above equation into (\ref{Ec1}) we get
\begin{align*}
 0&=\left(1-\sigma\right)\left(p(y)+x_{l}p'(y)\right)-C_{l}'(x_{l})=\left(1-\sigma\right)\left(\frac{1}{(x_1+x_2)}+\frac{x_{l}}{(x_1+x_2)^{2}}\right)-c_{l},
\end{align*}
therefore,
\begin{equation}
 x_l= (x_1+x_2)-\frac{c_l}{1-\sigma}(x_1+x_2)^2,  \quad l=1,2;\label{xl}
\end{equation}
On the other hand
\begin{align*}
 0=&-(1-{ q }_{ l })\sigma +{ q }_{ l }{ F }^{ \prime  }(u)=-(1-{ q }_{ l })\sigma +{ q }_{ l }s\sigma(x_{l}p(x_1+x_2)-z_{l}),
\end{align*}
hence
\begin{equation}
 z_l= \frac{x_l}{(x_1 + x_2)}-\frac{1-q_l}{q_l s}, \quad l=1,2.\label{zl}   
\end{equation}
From (\ref{xl}) we have
\begin{equation*}
    x_1=(x_1+x_2)-\frac{c_1}{1-\sigma}(x_1+x_2)^2\quad \text{and}\quad
    x_2=(x_1+x_2)-\frac{c_2}{1-\sigma}(x_1+x_2)^2,
\end{equation*}
adding the previous equations we have
$$x_1=\frac{1-\sigma}{c_1+c_2}-x_2.$$
Thus, 
\begin{align}
x_2=&(x_1+x_2)-\frac{c_2}{1-\sigma}(x_1+x_2)^2=\left(\frac{1-\sigma}{c_1+c_2}\right)-\frac{c_2}{1-\sigma}\left(\frac{1-\sigma}{c_1+c_2}\right)^2=\frac{1-\sigma}{(c_1+c_2)^2}c_1,\label{x1}\\
x_1=&\frac{1-\sigma}{c_1+c_2}-x_2=\frac{1-\sigma}{c_1+c_2}-\frac{1-\sigma}{(c_1+c_2)^2}c_1=\frac{1-\sigma}{(c_1+c_2)^2}c_2.\label{x2}
\end{align}
The points $z_l$, $l=1,2$, are obtained by replacing (\ref{x1}) and (\ref{x2}) in (\ref{zl}).}
\end{proof}

\medskip
\noindent
Next, some results regarding to delay-dependent and delay-independent stability are presented.

\subsection{Delay-dependent and Delay-independent stability}

Now, a stability analysis of Cournot duopoly model is performed when the Assumption A is considered and the marginal cost rate $\mu=\frac{c_2}{c_1}$ is introduced. Substituting $c_2=\mu c_1$, we may reformulate Assumption A as:

\medskip
\noindent
\textbf{Assumption B:} Let $p(y)=1/y$, $F(y)=\frac{1}{2}s\sigma y^2$,  $C_1(x_1)=c_1\,x_1$ and $C_2(x_2)=\mu\,c_1\,x_2$, where $s\geq 1$, $c_1>0$,  are constants, $\mu=\frac{c_2}{c_1}$ and $y=x_1+x_2$.

\medskip
\noindent
Therefore Proposition~\ref{PropEquilibrium} can be reformulated as:

\begin{proposition}\label{Equilibrium_DelayDep}
Let Cournot duopoly model with time-delay (\ref{tau}) such that Assumption B is met, then the equilibrium point of above model is
\rv{\begin{equation} 
x_{1}^{*}= {\frac {\mu\, \left(1- \sigma \right) }{{\it c_1}\, \left( 
1+\mu \right) ^{2}}},\quad x_{2}^{*} = \dfrac{1-\sigma}{c_1(1 +\mu)^{2}},\quad z_{1}^{*}= \dfrac{\mu}{1+\mu} - \dfrac{1-q_1}{s\,q_1},\quad z_{2}^{*} = \dfrac{1}{1+\mu} - \dfrac{1-q_1}{s\,q_1}.
\end{equation}}
 \end{proposition}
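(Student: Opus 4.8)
The plan is to recognize that Assumption~B is nothing more than Assumption~A rewritten under the substitution $c_2=\mu c_1$, with $\mu=c_2/c_1$ the marginal cost rate. Consequently the equilibrium point has already been determined in Proposition~\ref{PropEquilibrium}, and the present statement follows by a direct substitution into the formulas (\ref{equilibrium}), with no new critical-point computation required.

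First I would note that the defining equations (\ref{xl}) and (\ref{zl}) for the equilibrium are unaffected by the reparametrization, since the demand function $p(y)=1/y$, the penalty $F(u)=\frac{1}{2}s\sigma u^2$, and the linear cost structure are identical in the two assumptions; only the label $c_2$ is replaced by $\mu c_1$. Hence the four equilibrium coordinates are exactly those recorded in (\ref{equilibrium}), and it remains to rewrite them in terms of $c_1$ and $\mu$.

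The key simplification is $c_1+c_2=c_1(1+\mu)$ and therefore $(c_1+c_2)^2=c_1^2(1+\mu)^2$. Substituting into the output coordinates gives
\[
x_1^*=\frac{(1-\sigma)c_2}{(c_1+c_2)^2}=\frac{\mu(1-\sigma)}{c_1(1+\mu)^2},\qquad
x_2^*=\frac{(1-\sigma)c_1}{(c_1+c_2)^2}=\frac{1-\sigma}{c_1(1+\mu)^2},
\]
while in the declared-revenue coordinates the factor $c_1$ cancels against $c_1+c_2=c_1(1+\mu)$, yielding
\[
z_1^*=\frac{c_2}{c_1+c_2}-\frac{1-q_1}{sq_1}=\frac{\mu}{1+\mu}-\frac{1-q_1}{sq_1},\qquad
z_2^*=\frac{c_1}{c_1+c_2}-\frac{1-q_2}{sq_2}=\frac{1}{1+\mu}-\frac{1-q_2}{sq_2},
\]
which are the claimed expressions.

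Since the whole argument is a routine algebraic reformulation of an already-proven result, I do not anticipate any genuine obstacle; the only point requiring care is the correct bookkeeping of the indices on $q_l$ and $c_l$ in the two declared-revenue coordinates, so that the numerators $c_2$ and $c_1$ (and hence the factors $\mu$ and $1$) and the tax-audit terms are not accidentally interchanged.
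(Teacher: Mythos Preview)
Your proposal is correct and coincides with the paper's own proof, which consists of the single line ``Follow from Proposition~\ref{PropEquilibrium}''; you simply spell out the substitution $c_2=\mu c_1$ in more detail. Your careful bookkeeping is apt: the formula you obtain for $z_2^*$ carries the index $q_2$, whereas the displayed statement prints $q_1$---this is a typographical slip in the statement, not a flaw in your argument.
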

\begin{proof}
Follow from Proposition~\ref{PropEquilibrium}.
\end{proof}

\medskip
\noindent
Below, the main result of this paper is stated and proved.
\begin{theorem}\label{Corollary_DelayDep}
Let Cournot duopoly linear model with time-delay (\ref{GralLineal_duopolio}) be
stable for $\tau=0$ and Assumptions  B {is} satisfied.
Then, the system (\ref{GralLineal_duopolio}) is delay-dependent stable if $\mu\in(0,3-2\,\sqrt{2})\cup(3+2\,\sqrt{2},\infty)$. Moreover, the Cournot duopoly nonlinear model with time-delay (\ref{tau}) have a bifurcation occurs at 
\begin{equation*}
\tau_{0}=\frac { 1 }{ { \omega_0  } } { tan }^{ -1 }\left(\frac { N_2(\omega_0) }{ N_1(\omega_0) } \right).
\end{equation*}

On the other hand, the system (\ref{GralLineal_duopolio}) is delay-independent stable if $\mu\in[3-2\,\sqrt{2},\ 3+2\,\sqrt{2}]$.

\medskip
\noindent
Here, $N_1(\omega_0)= -\Phi(\omega_0)\left( ac{ \omega_0  }^{ 2 }-bd \right) -\Theta(\omega_0)\left( ad+bc \right)\omega_0$,
$N_2(\omega_0)=-\Phi(\omega_0)\left( ad+bc \right)\omega_0 +\Theta(\omega_0) \left( ac{ \omega_0  }^{ 2 }-bd \right)$, with $\Phi(\omega_0)={\omega}_0^{4}- \left(\alpha_{{1}}\beta_{{1}}+\alpha_{{0}}+\beta_{{0}} \right) {\omega}_0^{2}+\alpha_{{0}}\beta_{{0}}$, $\Theta(\omega_0)={\omega}_0^{3} \left( \alpha_{{1}}+\beta_{{1}} \right) - \left( \alpha_{{0}}\beta_{{1}}+\alpha_{{1}}\beta_{{0}} \right) \omega_0$; and
\begin{align*}
\alpha_{0} =& {\frac {2{\it k_1}\,{{\it c_1}}^{2}s{\it q_1}\,{\it k_3}\,\sigma\, \left( \mu+1 \right) }{1-\sigma}},\\
\alpha_{1} =& {\frac {-s{\it k_3}\,{\it q_1}\,{\sigma}^{2} \left( \sigma-2 \right) -
 \left( {\it k_1}\, \left( {\it q_1}\,s-2(\mu+1) \right) {{\it c_1}}^{2}
+s{\it q_1}\,{\it k_3} \right) \sigma-2\,{{\it c_1}}^{2}{\it k_1}\,
 \left( \mu+1 \right) }{ \left( -1+\sigma \right) ^{2}}},\\
\beta_{0} =& {\frac {2{\it k_2}\,{{\it c_1}}^{2}\mu\,s{\it q_2}\,{\it k_4}\,\sigma\, \left( \mu+1 \right) }{1-\sigma}}, \\
\beta_{1} =& {\frac { \left( -{\it k_2}\,\mu\, \left( \mu\,{\it q_2}\,s
-2(\mu+1) \right) {{\it c_1}}^{2}-s{\it q_2}\,{\it k_4} \right) \sigma-2\,{{\it c_1
}}^{2}{\it k_2}\,\mu\, \left( \mu+1 \right) -s{\it k_4}\,{\it q_2}\,{
\sigma}^{2} \left( \sigma-2 \right) }{ \left( -1+\sigma \right) ^{2}}}, \\
a =& {\frac {{\it k_1}\, \left(  \left( \mu\,{\it q_1}\,s-{\mu}^{2}+1 \right) \sigma+{\mu}^{2}-1 \right) {{\it c_1}}^{2}}{ \left( -1+\sigma \right) ^{2}}},\quad
b ={\frac {{\it k_1}\,{{\it c_1}}^{2}s{\it q_1}\,{\it k_3}\,\sigma\, \left( \mu-1 \right)  \left( \mu+1 \right) }{-1+\sigma}}, \\
c =& {\frac {{\it k_2}\, \left(  \left( \mu\,{\it q_2}\,s+{\mu}^{2}-1 \right) \sigma-{\mu}^{2}+1 \right) {{\it c_1}}^{2}}{ \left( -1+\sigma \right) ^{2}}},\quad
d ={\frac {{\it k_2}\,{{\it c_1}}^{2}s{\it q_2}\,{\it k_4}\,\sigma\, \left( \mu-1 \right)  \left( \mu+1 \right) }{-1+\sigma}}.
\end{align*}
\end{theorem}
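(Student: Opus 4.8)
The plan is to apply Proposition~\ref{theorem_gral}: the entire dichotomy is decided by whether the polynomial $P(\omega)$ of~(\ref{Polynomial_omega}) has a positive real root. Rather than attack the degree-$12$ polynomial with its coefficients $a_0,\dots,a_{12}$ directly, I would first record the algebraic identity that collapses it. Writing $A=ac\omega^2-bd$ and $B=\omega(ad+bc)$, one verifies $A^2+B^2=Q(\omega)$ and $N_1^2+N_2^2=(\Phi^2+\Theta^2)(A^2+B^2)$, while $\Phi+i\Theta=p_1(i\omega)p_2(i\omega)$ gives $\Phi^2+\Theta^2=|p_1(i\omega)|^2|p_2(i\omega)|^2$. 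Hence
\begin{equation*}
P(\omega)=Q(\omega)\,R(\omega),\qquad R(\omega):=|p_1(i\omega)|^2|p_2(i\omega)|^2-(a^2\omega^2+b^2)(c^2\omega^2+d^2).
\end{equation*}
Since $Q(\omega)=(a^2\omega^2+b^2)(c^2\omega^2+d^2)>0$ for $\omega>0$, the positive roots of $P$ coincide with the positive roots (in $v=\omega^2$) of the \emph{monic quartic} $R$. This reduces everything to degree four and carries a transparent meaning: $R>0$ is exactly $|p_1(i\omega)p_2(i\omega)|>|(ai\omega-b)(ci\omega-d)|$, i.e.\ the delayed term can never balance $p_1p_2$ on the imaginary axis.

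Next I would locate the threshold through the constant term $R(0)=\alpha_0^2\beta_0^2-b^2d^2=(\alpha_0\beta_0-bd)(\alpha_0\beta_0+bd)$. Substituting the Assumption~B expressions and setting $K:=k_1k_2k_3k_4c_1^4s^2q_1q_2\sigma^2>0$, a short computation gives $\alpha_0\beta_0=\tfrac{4K\mu(\mu+1)^2}{(1-\sigma)^2}$ and $bd=\tfrac{K(\mu^2-1)^2}{(1-\sigma)^2}$, whence
\begin{equation*}
\alpha_0\beta_0-bd=\frac{K(\mu+1)^2}{(1-\sigma)^2}\bigl(4\mu-(\mu-1)^2\bigr)=-\frac{K(\mu+1)^2}{(1-\sigma)^2}\,(\mu^2-6\mu+1).
\end{equation*}
Because $\alpha_0,\beta_0>0$ forces $\alpha_0\beta_0+bd>0$, the sign of $R(0)$ is governed entirely by $\mu^2-6\mu+1$, whose roots are precisely $3\pm2\sqrt2$. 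Thus $R(0)>0$ on $(3-2\sqrt2,3+2\sqrt2)$ and $R(0)<0$ outside the closed interval, which accounts for the two critical values in the statement.

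With this the two regimes split. For $\mu\in(0,3-2\sqrt2)\cup(3+2\sqrt2,\infty)$ one has $R(0)<0$ while $R(v)\to+\infty$, so $R$ has a positive root $v_0$, giving a crossing frequency $\omega_0=\sqrt{v_0}$; by Proposition~\ref{theorem_gral} the system is delay-dependent stable, the crossing delay is the principal value~(\ref{tau0}) with $n=0$, and at the smallest positive root $R$ crosses upward, $R'(v_0)>0$, which via the standard transversality computation (implicit differentiation of $q(\lambda,\tau)=0$) yields $\operatorname{sign}\frac{d}{d\tau}\operatorname{Re}\lambda\big|_{i\omega_0}>0$ and hence the Hopf bifurcation of~(\ref{sign}). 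The delicate direction is $\mu\in[3-2\sqrt2,3+2\sqrt2]$: here $R(0)\ge0$, but this alone does \emph{not} preclude a positive root, since the quartic could dip below zero for some $v>0$. I would therefore need to prove $R(v)>0$ for all $v\ge0$ throughout the interval.

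The main obstacle is exactly this positivity claim. The naive factor-by-factor bounds $|p_1(i\omega)|^2\ge a^2\omega^2+b^2$ and $|p_2(i\omega)|^2\ge c^2\omega^2+d^2$ only hold for $\mu\in[1/3,3]$, strictly inside $[3-2\sqrt2,3+2\sqrt2]\approx[0.17,5.83]$, so near the endpoints one genuinely needs the product inequality and cannot decouple the two firms. My plan would be to expand $R(v)=v^4+r_3v^3+r_2v^2+r_1v+r_0$ using the Assumption~B values, making $r_0,\dots,r_3$ explicit rational functions of $\mu,\sigma,s,q_1,q_2,k_i,c_1$, and then to rule out roots in $(0,\infty)$ on the whole interval---either by a sum-of-squares/positivity certificate for $R$ on $v\ge0$, or by a discriminant-plus-coefficient-sign argument, most plausibly with computer-algebra assistance. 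The outcome I would aim for is that, after clearing positive denominators, the obstruction to a positive root again factors through $\mu^2-6\mu+1$, so that the boundary of the delay-independent region is pinned to the same values $3\pm2\sqrt2$ and matches the constant-term analysis exactly.
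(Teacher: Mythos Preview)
Your approach is correct and genuinely different from the paper's. The paper works directly with the degree-twelve polynomial $P(\omega)$: for the delay-dependent regime it observes $a_{12}>0$ and computes $a_0$ explicitly, finding it negative precisely when $\mu^2-6\mu+1>0$, so $P$ must have a positive root; for the delay-independent regime it claims that \emph{all} coefficients $a_0,\dots,a_{12}$ are nonnegative on $[3-2\sqrt2,\,3+2\sqrt2]$, but verifies the intermediate coefficients $a_2,\dots,a_{10}$ only numerically via Lagrange-multiplier minimization (tabulated, not proved). Your factorization $P=Q\cdot R$ with $R=|p_1(i\omega)p_2(i\omega)|^2-Q$ is a real simplification: it drops the problem to a monic quartic in $v=\omega^2$, gives $R$ a transparent modulus-comparison interpretation, and recovers the threshold $3\pm2\sqrt2$ through the same constant-term computation (indeed $a_0=b^2d^2\,R(0)$, so the signs agree). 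On the delay-dependent side your argument is complete and equivalent to the paper's. On the delay-independent side you correctly flag that $R(0)\ge0$ is insufficient and that positivity of $R$ on $(0,\infty)$ remains to be established; this is exactly the step the paper handles by numerical optimization rather than proof, so your proposal is no less rigorous than the published argument, and the reduction to degree four makes an analytic certificate considerably more tractable. One minor overreach: Proposition~\ref{theorem_gral} in the paper \emph{assumes} the transversality condition~(\ref{sign}) as a hypothesis rather than deriving it, so your claim that $R'(v_0)>0$ at the smallest root delivers~(\ref{sign}) goes beyond what the paper actually proves---the link is standard (Cooke--Grossman), but you would need to supply it.
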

\begin{proof}
\rv{Note that the existence of a (bifurcation) critical parameter $\tau_0$ directly depends on the existence of a positive root $\omega_0\neq0$ such that the polynomial given in (\ref{Polynomial_omega}) satisfy $p(\omega_0)=0$. Hence, if this $\omega_0$ does not exist, then there is not $\tau_0$. Thus, delay-dependent stability  is reduced to obtain conditions for the existence of $\omega_0$, below are some arguments in this regard.} 

\medskip
\noindent
It is well-known that a polynomial of the form $P(\omega)=a_n\omega^n+a_{n-1}\omega^{n-1}+\ldots+a_1\omega+a_0$ has at least a positive root $\omega_0\neq0$, if $a_n$ is positive and $a_0$ is negative. Based on the foregoing and observing that the coefficient of the polynomial (\ref{Polynomial_omega}) are
\begin{align*}
a_{12}&={\frac {{{\it k_1}}^{2}{{\it c_1}}^{8} \left( -\mu\,{\it q_1}\,s\sigma+{
\mu}^{2}\sigma-{\mu}^{2}-\sigma+1 \right) ^{2}{{\it k_2}}^{2} \left( 
\mu\,{\it q_2}\,s\sigma+{\mu}^{2}\sigma-{\mu}^{2}-\sigma+1 \right) ^{2}
}{ \left( -1+\sigma \right) ^{8}}},\\
a_{0}&=-{\frac {{{\it k_1}}^{4}{{\it c_1}}^{16}{s}^{8}{{\it q_1}}^{4}{{\it k_3}}^
{4}{\sigma}^{8} \left( \mu-1 \right) ^{4} \left( \mu+1 \right) ^{10}{{
\it k_2}}^{4}{{\it q_2}}^{4}{{\it k_4}}^{4} \left( {\mu}^{2}-6\,\mu+1
 \right) }{ \left( -1+\sigma \right) ^{8}}},
\end{align*}
\rv{where $a_{12}$ is always positive, while  $a_0$ is negative for $\mu\in(0,3-2\,\sqrt{2})\cup(3+2\,\sqrt{2},\infty)$. We can conclude that}
at least there is an \rv{$\omega_0>0$} solution of the polynomial (\ref{Polynomial_omega}) and using Theorem~\ref{theorem_gral}, the first part of result follows.

\medskip
\noindent
On the other hand, \rv{a polynomial of the form} $P(\omega)=a_n\omega^n+a_{n-1}\omega^{n-1}+\ldots+a_1\omega+a_0$ has no positive root  $\omega_0\neq0$, if all its $a_l$ are positive.
As mentioned earlier,  $a_{12}$ is always positive and $a_0$ is positive if $\mu\in[3-2\,\sqrt{2},\ 3+2\,\sqrt{2}]$. However, the coefficients $a_l$, $l=10,8,6,4,2$, of the polynomial (\ref{Polynomial_omega}) are extensive and the analytically demonstration of its positivity is not trivial. Using Lagrange multipliers and numerical methods,  the minimums of the coefficients $a_l$ are shown to be zero, as depicted in Table~\ref{TableMinimun}.

\begin{table}
\caption{Minimum values of the coefficients $a_{l}$ of (\ref{Polynomial_omega}).}\label{TableMinimun}
\begin{tabular*}{.8\linewidth}{@{\extracolsep{\fill}}cccccc}\hline
 & $a_{10}$ & $a_{8}$   & $a_{6}$  & $a_{4}$   & $a_{2}$     \\  \hline
min      &   0      &   0       &  0       &   0       &   0         \\ 
$\mu $   &   0.17   &   3.72    &  0.48    &   2.99    &   3         \\ 
$c_1$    &   0      &   43693   &  31889.2 &   49734.3 &   50000     \\ 
$k_{1}$  &   0      &   0       &  0       &   0       &   50000     \\ 
$k_2$    &   0      &   72553.2 &  10947.7 &   49388.4 &   50000     \\ 
$k_3$    &   50000  &   88383.8 &  13228.7 &   48492.4 &   50000     \\ 
$k_4$    &   50000  &   36467.5 &  0       &   48746.1 &   50000     \\ 
$q_1$    &   0      &   0.68    &  0.13    &   0.49    &   0.50      \\ 
$q_2$    &   0      &   0.33    &  0.15    &   0.49    &   0.50      \\ 
$s$      &   1      &   49881.6 &  10915.3 &   49318.5 &   50000   \\ 
$\sigma$ &   0      &   0.51    &  0.16    &   0.33    &   0         \\  \hline
\end{tabular*}
\end{table}

\smallskip
\noindent
Thus, using Theorem~\ref{theorem_gral}, the second part of result follows.  
\end{proof}

\begin{remark}
It is puzzling that the interval of delay-independent stability in Theorem~\ref{Corollary_DelayDep} is precisely the interval for the stability  for a Cournot duopoly model presented in \cite{A1}. 
\end{remark}

The particular case when $\mu=1$ will further be analyzed next. It corresponds to the case when both firms have the same marginal costs. Although the following assertions are direct consequence of our previous results, we believe it to be interesting that a simplified hypothesis allow direct computations of the coefficients of the polynomial~(\ref{Polynomial_omega}). In this case, for instance, $a_0$ and $a_2$ are both zero.

%

%

\medskip
\noindent
\textbf{Assumption C:} the probability of being audited and detected of both firms is the same, $q_2=q_1$, the constants $k_j$, $j=2,3,4$ are equals to $k_1$ and $\mu=1$.

\begin{corollary}\label{Equilibrium_DelayIndep}
Consider the Cournot duopoly nonlinear model with time-delay (\ref{tau}) with
Assumptions  B and C. The equilibrium point is
\begin{equation*}
x_{1}^{*} = \dfrac{1-\sigma}{4c_1} =  x_{2}^{*}, \hspace{1.5cm} z_{1}^{*} = \dfrac{1}{2} -\dfrac{1-q_1}{q_1 s} = z_{2}^{*}.
\end{equation*}
\end{corollary}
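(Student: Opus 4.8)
The plan is to derive this purely by specialization from Proposition~\ref{Equilibrium_DelayDep}, which already records the equilibrium point of the model under Assumption~B in terms of the marginal cost rate $\mu=c_2/c_1$ and the audit probabilities. The key observation is that the equilibrium is governed by the fixed-point equations obtained by setting each component of (\ref{tau_particular}) equal to zero; since the multipliers $k_j$ enter only as nonzero prefactors, they cancel, and the location of the equilibrium is completely independent of the $k_j$. Consequently, the portion of Assumption~C stipulating $k_2=k_3=k_4=k_1$ plays no role in the present computation, and only the hypotheses $\mu=1$ and $q_2=q_1$ will actually be used.

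First I would invoke Proposition~\ref{Equilibrium_DelayDep} to write
\[
x_1^*=\frac{\mu(1-\sigma)}{c_1(1+\mu)^2},\qquad x_2^*=\frac{1-\sigma}{c_1(1+\mu)^2},\qquad z_1^*=\frac{\mu}{1+\mu}-\frac{1-q_1}{sq_1},\qquad z_2^*=\frac{1}{1+\mu}-\frac{1-q_1}{sq_1}.
\]
Then I would substitute $\mu=1$. This immediately gives $(1+\mu)^2=4$, so that both $x_1^*$ and $x_2^*$ collapse to $\tfrac{1-\sigma}{4c_1}$; likewise $\tfrac{\mu}{1+\mu}=\tfrac{1}{1+\mu}=\tfrac12$, so that $z_1^*=z_2^*=\tfrac12-\tfrac{1-q_1}{q_1 s}$. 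The hypothesis $q_2=q_1$ guarantees that the two revenue coordinates are consistent, and the displayed formulas of the corollary follow at once.

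There is essentially no obstacle here: the statement is a direct specialization of an already-established proposition, so the only care required is to confirm that the choice $\mu=1$ is compatible with the standing hypotheses (namely $s\geq1$, $c_1>0$, $q_1\in(0,1)$ and $\sigma\in[0,1)$), so that the equilibrium remains well defined, which it plainly is.
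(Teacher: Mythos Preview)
Your proposal is correct and follows exactly the paper's approach: the paper's proof consists of the single line ``Follow from Proposition~\ref{Equilibrium_DelayDep},'' and you have simply spelled out that specialization by substituting $\mu=1$ (and $q_2=q_1$) into the formulas recorded there.
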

\begin{proof}
Follow from Proposition~\ref{Equilibrium_DelayDep}.
\end{proof}

\begin{corollary}\label{Corollary_DelayIndep}
Consider the Cournot duopoly linear model with time-delay (\ref{GralLineal_duopolio})
with Assumptions B and C. Then the Cournot duopoly linear model is delay-independent stable. In other words, if the quasi-polynomial is stable for $\tau=0$, then the quasi-polynomial will remain stable for all $\tau>0$.
\end{corollary}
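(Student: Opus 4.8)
The plan is to specialize Theorem~\ref{Corollary_DelayDep} to the hypotheses of Assumption~C, namely $\mu=1$, $q_2=q_1$, and $k_2=k_3=k_4=k_1$. Under Assumption~C we have $\mu=1$, which lies in the closed interval $[3-2\sqrt{2},\,3+2\sqrt{2}]$ since $3-2\sqrt{2}\approx 0.17$ and $3+2\sqrt{2}\approx 5.83$. Theorem~\ref{Corollary_DelayDep} already establishes that for any $\mu$ in this closed interval the linear system (\ref{GralLineal_duopolio}) is delay-independent stable whenever it is stable at $\tau=0$. Thus the corollary follows immediately by invoking Theorem~\ref{Corollary_DelayDep} with the parameter value $\mu=1$.

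First I would note that Assumption~C is a strictly stronger hypothesis than Assumption~B together with the constraint $\mu=1$: it fixes $\mu=1$ and additionally equalizes the probabilities and the adjustment constants $k_j$. Since Theorem~\ref{Corollary_DelayDep} only requires $\mu\in[3-2\sqrt{2},\,3+2\sqrt{2}]$ and does not constrain the other parameters, the extra equalities in Assumption~C are harmless and the delay-independent stability conclusion applies directly. The key point to verify is simply the membership $1\in[3-2\sqrt{2},\,3+2\sqrt{2}]$, which reduces to the elementary inequality $3-2\sqrt{2}<1<3+2\sqrt{2}$, equivalently $2-2\sqrt{2}<0$ and $2+2\sqrt{2}>0$, both obviously true.

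As a concrete illustration (the motivation flagged in the discussion preceding Assumption~C), I would optionally record that under Assumption~C the coefficient $a_0$ of the polynomial (\ref{Polynomial_omega}) vanishes, since its factor $(\mu^2-6\mu+1)$ equals $1-6+1=-4\neq 0$ would suggest otherwise, so instead one checks that the factor $(\mu-1)^4$ in $a_0$ is zero at $\mu=1$; similarly $a_2$ carries a factor $(\mu-1)$ and vanishes. Hence $P(\omega)=\omega^2(a_{12}\omega^{10}+\dots+a_4)$, so $\omega=0$ is a root but the question of a nonzero root reduces to the reduced polynomial, which by the theorem's analysis has all remaining coefficients positive at $\mu=1$. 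This confirms directly, without reappealing to the Lagrange-multiplier table, that no positive $\omega_0$ exists.

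The main obstacle here is essentially notational rather than mathematical: the corollary is a pure specialization, so the only care required is to make sure the hypotheses of Assumption~C genuinely imply those of Theorem~\ref{Corollary_DelayDep} (stability at $\tau=0$ is assumed in both, and $\mu=1$ lands in the stable interval). No new estimate or computation is needed beyond confirming the interval membership, so I would keep the proof to a one-line appeal to the theorem, as the authors do with ``Follow from.''

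\begin{proof}
This is the special case $\mu=1$ of Theorem~\ref{Corollary_DelayDep}. Indeed, since $3-2\sqrt{2}<1<3+2\sqrt{2}$, we have $\mu=1\in[3-2\sqrt{2},\,3+2\sqrt{2}]$, and Assumption~C implies Assumption~B with $\mu=1$. Hence by Theorem~\ref{Corollary_DelayDep} the system (\ref{GralLineal_duopolio}) is delay-independent stable, i.e. if it is stable for $\tau=0$ it remains stable for all $\tau>0$.
\end{proof}
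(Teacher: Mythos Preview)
Your proof is correct, but it differs from the paper's. You argue by specializing Theorem~\ref{Corollary_DelayDep} at $\mu=1$; the paper instead gives a direct computation: under Assumptions~B and~C it writes out the coefficients $a_{12},a_{10},a_8,a_6,a_4,a_2,a_0$ of the polynomial~(\ref{Polynomial_omega}) explicitly, observes that $a_{12},\ldots,a_4$ are manifestly positive (each is a sum of nonnegative terms with positive leading contribution) and that $a_2=a_0=0$, and concludes via Proposition~\ref{theorem_gral} that no nonzero real $\omega_0$ exists. The paper even flags, just before Assumption~C, that the corollary is a ``direct consequence of our previous results'' but that the point of re-proving it is precisely that the simplified hypotheses permit closed-form coefficients. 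Your route is shorter, but it inherits the reliance on the Lagrange-multiplier/numerical verification used in the delay-independent half of Theorem~\ref{Corollary_DelayDep}; the paper's explicit computation is fully analytic and self-contained, which is the added value the authors intended. (A small aside: your informal remark that ``$a_2$ carries a factor $(\mu-1)$'' is not substantiated by any formula displayed in the paper --- $a_2$ is not factored there --- so if you wanted to keep that illustration you would need to justify it, but since your formal proof does not depend on it this is harmless.)
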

\begin{proof}
Note that the constants $a_{12}$, $a_{10}$, \ldots, $a_{0}$ of the polynomial (\ref{Polynomial_omega}) are 
\begin{align*}
a_{12} =&{\frac {{{\it q_1}}^{4}{s}^{4}{\sigma}^{4}{{\it k_1}}^{4}{{\it c_1}}^{8}}{ \left( -1+\sigma \right) ^{8}}}, \\
a_{10}=& \left( 2\,{\frac {{{\it q}}_1^{4}{\sigma}^{4}{s}^{4} \left( {\it q_1}\,
s\sigma-4\,\sigma+4 \right) ^{2}{{\it c}_1}^{12}}{ \left( -1+\sigma
 \right) ^{12}}}+4\,{\frac {{s}^{6}{\sigma}^{6}{{\it q}}_1^{6}{{\it c}
}_1^{10}}{ \left( -1+\sigma \right) ^{10}}}+2\,{\frac {{s}^{6}{\sigma}^{
6}{{\it q1}}^{6}{{\it c}}_1^{8}}{ \left( -1+\sigma \right) ^{8}}}
 \right) {{\it k}}_1^{6},\\
a_{8} =& \Biggl[  \left( {\frac {16\,{s}^{7}{\sigma}^{7}{{\it q}}_1^{7}}{
 \left( 1-\sigma \right) ^{15}}}+{\frac {96\,{s}^{6}{\sigma}^{6}{{
\it q}}_1^{6}}{ \left( -1+\sigma \right) ^{14}}}+{\frac {256\,{{\it q}
}_1^{5}{s}^{5}{\sigma}^{5}}{ \left(1-\sigma \right) ^{13}}}+{
\frac {256\,{{\it q}}_1^{4}{\sigma}^{4}{s}^{4}}{ \left( -1+\sigma \right) ^{
12}}} \right) {{\it c}}_1^{16}+{\frac {4\,{\sigma}^{6}{s}^{6}{{\it q}}_1
^{6} \left( {\it q_1}\,s\sigma-4\,\sigma+4 \right) ^{2}
}{ \left( -1+\sigma \right) ^{14}}}{{\it c}}_1^{14}\\
&+ \left( {\frac {6\,{s}^{8}{\sigma}
^{8}{{\it q}}_1^{8}}{ \left( -1+\sigma \right) ^{12}}}+{\frac {16\,{s}^
{7}{\sigma}^{7}{{\it q}}_1^{7}}{ \left(1-\sigma \right)^{11}}}+{
\frac {64\,{s}^{6}{\sigma}^{6}{{\it q}}_1^{6}}{ \left( -1+\sigma \right) ^{
10}}} \right) {{\it c}}_1^{12}+{\frac {4\,{s}^{8}{\sigma}^{8}{{\it q}}_1
^{8}}{ \left( -1+\sigma \right) ^{10}}}{{\it c}}_1^{10}+{\frac {{s}^{8}
{\sigma}^{8}{{\it q}}_1^{8}}{ \left( -1+\sigma \right) ^{
8}}}{{\it c}}_1^{8} \Biggr] {{\it k}}_1^{8},\\
a_{6} =& {\frac {32\,{{\it q}}_1^{6}{\sigma}^{6}{s}^{6} \left( 4+ \left( {\it q_1
}\,s-4 \right) \sigma \right) ^{2}{{\it k}}_1^{10}}{
 \left( -1+\sigma \right) ^{14}}}{{\it c}}_1^{16}+{\frac {64\,{s}^{8}{
\sigma}^{8}{{\it q}}_1^{8}{{\it k}}_1^{10}}{ \left( -1+\sigma \right) ^{
12}}}{{\it c}}_1^{14}+{\frac {32\,{s}^{8}{\sigma}^{8}{{\it q}}_1^{8}{{\it k}}_1^{10}}{ \left( -1+\sigma \right) ^{10}}}{{
\it c}}_1^{12},\\
a_{4}=&{\frac {256\,{{\it q}}_1^{8}{s}^{8}{
\sigma}^{8}{{\it k}}_1^{12}}{ \left( -1+\sigma \right) ^{12}}}{{\it c}}_1^{16}\\
a_2=&a_0=0.
\end{align*}
Since $a_{12},\ldots,a_4$ are positive and both  $a_2$ and $a_0$ are zero, \rv{we conclude that the polynomial (\ref{Polynomial_omega}) has no positive real roots $\omega_0\neq0$ and the result follows.}
\end{proof}

\section{Simulation of Results}
Below, some numerical simulations are presented to illustrate the theoretical results obtained in the previous section using Matlab's Simulink.

\medskip
Without loss of generality, consider the Cournot duopoly nonlinear model with tax evasion and time-delay given in (\ref{tau_particular}), with  $\sigma=0.1$, $s =40$, $q_{1,2}=0.5$, $k_{1,2,3,4}=1$ and  $c_2:=\mu c_1$, $c_1=0.1$:
\begin{equation}\label{NLS_simulation}
\begin{split}
\frac{dx_{ 1 }}{dt}=& \left(\frac{1}{\left({\it x_1}+{\it x_2}\right)} - \frac {{\it x_1}}{ \left( {\it x_1}+{\it x_2} \right) ^{2}} \right)\left(0.95- 2\,
 \left( {\frac{{\it x_1}}{\left({\it x_1}+{\it x_2}\right)}}-{\it z_1} \right) \right)- {0.1},\\ 
\frac{dx_{ 2 }}{dt}=&\left(\frac{1}{\left( {x_1}\sb\tau +
{\it x_2} \right)}-\frac{{\it x_2}}{ \left( {x_1}\sb\tau  +{
\it x_2} \right) ^{2}}   \right) \left(0.95-2\, \left( {\frac {{\it x_2}
}{\left({x_1}\sb\tau+{\it x_2}\right)}}-{\it z_2} \right) \right)- \mu\,{0.1},\\
\frac{dz_{ 1 }}{dt}=&\,2\left( \frac {{\it x_1}}{{\it x_1}+{\it x_2}}-{\it z_1}\right)- 0.05,\\
\frac{dz_{ 2 }}{dt}=&\,2\left( \frac {{\it x_2}}{{x_1}\sb\tau+{\it x_2}}-{\it z_2}\right)- 0.05.
\end{split}
\end{equation}
Thus, using Proposition~\ref{Equilibrium_DelayDep} the equilibrium point of above system is  
\begin{equation} \label{equilibrium_DelayIndep}
x_{1}^{*} = \dfrac{0.09\mu}{(0.1 + 0.1\mu)^{2}}, \quad  x_{2}^{*} = \dfrac{0.9}{(0.1 +0.1\mu)^{2}},\quad z_{1}^{*} = \dfrac{0.1\mu}{0.1 + 0.1\mu} - 0.025, \quad  z_{2}^{*} = \dfrac{0.1}{0.1+0.1\mu} -0.025. 
\end{equation}

Cournot duopoly linear model with time-delay (\ref{GralLineal_duopolio}) is
\begin{equation*}
\dfrac{d \vec {x}(t)}{dt}=A\vec { x } +B\vec { x }_{\tau},\\
\end{equation*}
where, $A=\left[ \begin {array}{cccc} - 0.02\,\mu- 0.046&
 0.01\,({\mu}^{2}-1)+ 0.024\,\mu&
 0.4\,\mu+ 0.2&0\\ \noalign{\medskip}0& \left( -
 0.02- 0.046\,\mu \right) \mu&0& 0.2\,\mu
\\ \noalign{\medskip} 0.2&- 0.2\,\mu&- 2&0
\\ \noalign{\medskip}0& 0.2\,\mu&0&- 2\end {array} \right]$, and\newline $B=\left[ \begin {array}{cccc} 0&0&0&0\\ \noalign{\medskip}-
 0.01\,{\mu}^{2}+ 0.024\,\mu+ 0.01&0&0&0
\\ \noalign{\medskip}0&0&0&0\\ \noalign{\medskip}- 0.2&0&0&0
\end {array} \right]$.

\medskip
\noindent
The quasi-polynomial (\ref{quasi}) is
\begin{equation}\label{Q_simulation}
q(\lambda,\tau)=p_{1}(\lambda)p_{2}(\lambda)-e^{-\lambda \tau}(a\lambda-b)(c\lambda-d),
\end{equation}
with $\alpha_1=-2.04-0.02 \mu$, $\beta_1= -0.12 \times 10^{-2}\mu (18\mu-2)-2-0.024 \mu (\mu+1)$,
$\alpha_0=0.04\mu +0.04$, $\beta_0=0.04\mu (\mu+1)$,
$a= 0.01\mu^{2}+0.024\mu -0.01$, $b=-0.02(\mu-1)(\mu+1)$,
$c=-0.01\mu^{2}+0.024\mu+0.01$, $d=0.02(\mu-1)(\mu+1)$.
The polynomial  $P(\omega)$ given in (\ref{Polynomial_omega}) is
\begin{align*}
P(\omega)&=a_{12}\omega^{12} + a_{10}\omega^{10} + a_{8}\omega^{8}+ a_{6}\omega^{6}+a_{4}\omega^{4}+a_{2}\omega^{2} +a_{0}=0.
\end{align*}
Here, 
\begin{equation*}
\begin{split}
a_{12}=&\, 0.23\!\times\!10^{-7}\left(  0.9\,{\mu}^{2}+ 2.0\,\mu- 0.9 \right) ^
{2} \left( - 0.9{\mu}^{2}+ 2.0\mu+ 0.9 \right) ^{2},\\
a_{10}=&\, { 3.35\!\times\!10^{-11}}{\mu}^{12}+
{ 3.17\!\times\!10^{-11}}{\mu}^{11}+
 0.105\!\times\!10^{-8}{\mu}^{10} -0.409\!\times\!10^{-9}{\mu}^{9}+ 0.225\!\times\!10^{-6}
{\mu}^{8}\\ 
&\qquad + 0.115\!\times\!10^{-7}{\mu}^{7}
-0.152\!\times\!10^{-5}{\mu}^{6}+ 0.115\!\times\!10^{-8}{
\mu}^{5}+ 0.56\!\times\!10^{-5}{\mu}^{4}-
 0.409\!\times\!10^{-9}{\mu}^{3}\\
&\qquad - 0.159\!\times\!10^{-5}
{\mu}^{2}+{ 3.177\!\times\!10^{-11}}\mu +0.24\!\times\!10^{-6}, 
\\
\vdots =&\quad \vdots\\
a_{2} =&-{5.94\!\times\!10^{-14}}{\mu}^{16
}+{2.2\!\times\!10^{-26}}{\mu}^{15}+{
 1.95\!\times\!10^{-12}}{\mu}^{14}+{ 6.5
\!\times\!10^{-12}}{\mu}^{13}+ 2.02\!\times\!10^{-9}{\mu}^{12}\\
&\qquad + 4.03\!\times\!10^{-9}{\mu}^{11}
-4.15\!\times\!10^{-9}{\mu}^{10}-1.23\!\times\!10^{-8}
{\mu}^{9}-1.71\!\times\!10^{-9}{\mu}^{8}+
8.85\!\times\!10^{-9}{\mu}^{7}\\ %
&\qquad + 7.88\!\times\!10^{-9}
{\mu}^{6}+ 6.93\!\times\!10^{-9}{\mu}^{5}
-2.20\!\times\!10^{-9}{\mu}^{4}-1.13\!\times\!10^{-8}
{\mu}^{3}-3.75\!\times\!10^{-9}{\mu}^{2}\\ %
&\qquad + 3.85\!\times\!10^{-9}\mu-1.92\times 10^{-10},\\
a_{0} =&-{ 5.94\times 10^{-14}}\, \left( \mu-1 \right) ^{4} \left( \mu+
1 \right) ^{10} \left( {\mu}^{2}-6\,\mu+1 \right). 
\end{split}
\end{equation*}

Immediately, some simulations of the Cournot duopoly nonlinear model are presented using Simulink-Matlab for some values of $\mu$.

\subsection{Delay-independent stability}

Using \rv{Theorem}~\ref{Corollary_DelayDep} for $\mu\in[3-2\,\sqrt{2},\ 3+2\,\sqrt{2}]=[0.1716,5.8284]$
the Cournot duopoly nonlinear model with tax evasion and time-delay (\ref{NLS_simulation}) is delay-independent stability. In Table~\ref{m_indep}, some equilibrium points are obtained for different values of $\mu$. 
\rv{In Figure~\ref{FigDelayIndep}, three maps of the roots location of the quasi-polynomial (\ref{Q_simulation}) in the complex plane when $\mu\in[0.1716,5.8284]$ are presented. In the first map, $\mu=0.1716$ is fixed and $\tau=0,\ 10,\ 100,\ 1000$ varying. In the second, 
$\mu=3.8$ and $\tau=0,\ 10,\ 100,\ 1000$. While, $\mu = 5.8284$ and $\tau=0,\ 10,\ 100,\ 1000$ in the third. In the three maps, it can be seen that the roots approximate the imaginary axis when $\tau$ increases, even more these roots appear to form an abscissa close to the imaginary axis, but never touch this axis. It should be noted that, increasing the value of $\tau$, implies increasing the difficulty in graphing the roots due to the increase in computational calculation to approximate the roots. Note that, all roots of (\ref{Q_simulation}) remain in the open left-half of complex plane.
This behavior is similar for all $\tau>0$ and $\mu\in[3-2\,\sqrt{2},\ 3+2\,\sqrt{2}]$. This exemplifies the postulated in the theoretical results above. The roots can be calculated using QPmR \cite{vyhlidal2009mapping}.}

\begin{table}
\caption{Equilibrium points of Cournot duopoly model (\ref{NLS_simulation}) when $\mu\in[0.1716,5.8284]$.}\label{m_indep}
\begin{tabular*}{.7\linewidth}{@{\extracolsep{\fill}}cccccc}\hline
$\mu$ & $x^{*}_{1}$ & $x^{*}_{2}$ & $z^{*}_{1}$ & $z^{*}_{2}$ \\ \hline
0.1716 & 1.12 & 6.5& 0.12 & 0.82\\ 
0.5    & 2 & 4 & 0.30 & 0.64\\ 
1    & 2.25 & 2.25 & 0.47 & 0.47\\ 
1.5   & 2.16 & 1.44 & 0.57 & 0.37\\ 
2     & 2    & 1    & 0.64 & 0.30\\ 
2.5   & 1.18 & 0.73 & 0.68 & 0.26\\ 
3     & 1.68 & 0.56 & 0.72 & 0.22\\ 
3.5   & 1.55 & 0.44 & 0.75 & 0.19\\ 
4     & 1.44 & 0.36 & 0.77 & 0.17\\ 
4.5   & 1.33 & 0.29 & 0.79 & 0.15 \\ 
5     & 1.25 & 0.25 & 0.80 & 0.14\\
5.8284& 1.12 & 0.19 & 0.82 & 0.12\\ \hline 
\end{tabular*}
\end{table}

\begin{figure}
 \centering
 \includegraphics[scale=.48]{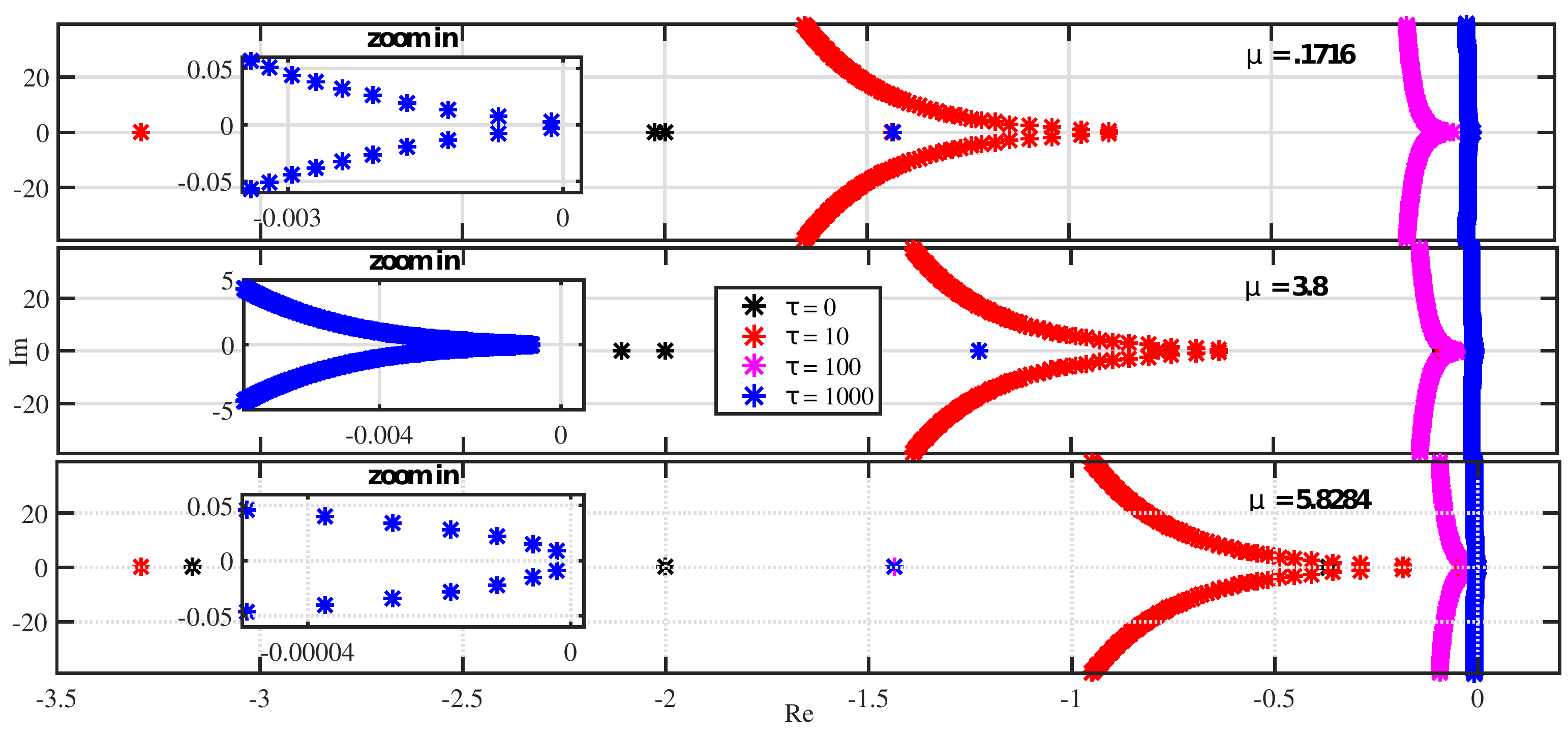}
 \caption{Delay-independent: roots location of the quasi-polynomial (\ref{Q_simulation}) in the complex plane.}\label{FigDelayIndep}
\end{figure}

\begin{figure}
 \centering
 \includegraphics[scale=.43]{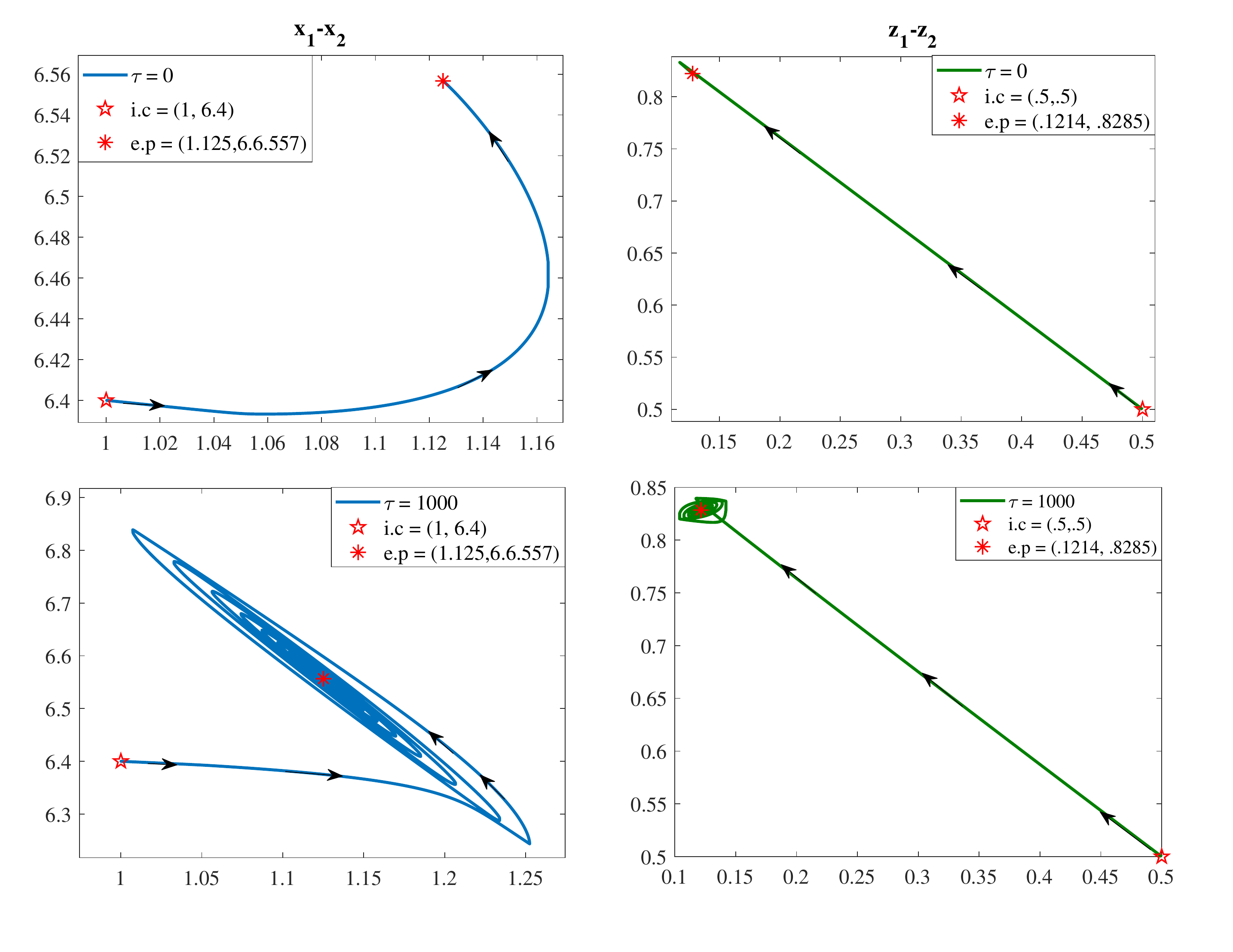}
 \caption{Phase diagram when $\mu=0.1716$.  }\label{fig-Phase3}
\end{figure}

\subsection{Delay-dependent stability}

Using \rv{Theorem}~\ref{Corollary_DelayDep} for $\mu\in(0,3-2\,\sqrt{2})\cup(3+2\,\sqrt{2},\infty)$ the Cournot duopoly model with tax evasion and time-delay (\ref{NLS_simulation}) is delay-dependent stability. Table \ref{m_dep} gives critical values of $\tau^*$ which can produce bifurcations and limit cycles in the system (\ref{NLS_simulation}). For illustrative purposes, in Figures~\ref{RootsDelayDepend}-\ref{fig-Phase2} a particular case of these parameters is shown.

\begin{table}
\caption{Equilibrium points and critical values $\tau^{*}$  of Cournot duopoly model (\ref{NLS_simulation}) when $\mu\in[0,3-2\,\sqrt{2})\cup(3+2\,\sqrt{2},\infty)$.}\label{m_dep}

\begin{tabular*}{.7\linewidth}{@{\extracolsep{\fill}}cccccc}\hline
$\mu$  & $x^{*}_{1}$ & $x^{*}_{2}$ & $z^{*}_{1}$ & $z^{*}_{2}$ & $\tau^{*}$     \\  \hline
0      & 0    & 9    & -0.2  & 0.95 &   246.4898206  \\ 
0.01   & 0.08 & 8.82 & -0.01 & 0.96 &  257.6637089  \\ 
0.04   & 0.33 & 8.32 & 0.1   & 0.93 &  299.1040366  \\ 
0.08   & 0.61 & 7.71 & 0.04  & 0.90 &  385.7162877  \\ 
0.1    & 0.74 & 7.43 & 0.06  & 0.88 &  455.8218422  \\ 
0.14   & 0.96 & 6.92 & 0.09  & 0.85 &  770.9037092  \\ 
6      & 1.11 & 0.18 & 0.83  & 0.11 &  64.72944712  \\ 
10     & 0.7438 & 0.07438 & 0.8841  & 0.0659 &  4.809451548  \\ 
100    & 0.08 & $0.88\times 10^{-3}$ & 0.96 & -0.01 &  0.060011892  \\
1000   & $0.88\times 10^{-2}$ & $0.89\times 10^{-5}$ & 0.97 & -0.02&   0.000579951  \\ \hline 
\end{tabular*}
\end{table}

\medskip
\noindent
\rv{In Figure \ref{RootsDelayDepend}, four maps of the roots location of the quasi-polynomial (\ref{Q_simulation}) in the complex plane are depicted when $\mu=10$ is fixed and $\tau=0,\ 3,\ 4.8,\ 5$ varying.
In the first map, if $\tau=0$ then (\ref{Q_simulation}) is a fourth-order polynomial, so it only has four roots in the open left-half of complex plane. While, in the second map  if $\tau\in(0,\tau^*)$ then the quasi-polynomial (\ref{Q_simulation}) has now an infinite number of roots, but all located in the open left-half of complex plane. Finally, when $\tau=\tau^*$ (\ref{Q_simulation}) has two dominant roots on the imaginary axis and when $\tau>\tau^*$ some roots cross to the right side causing the quasi-polynomial to be unstable. Therefore, the postulated in the theoretical results above is illustrated.} 

\begin{figure}
 \centering
 \includegraphics[scale=.47]{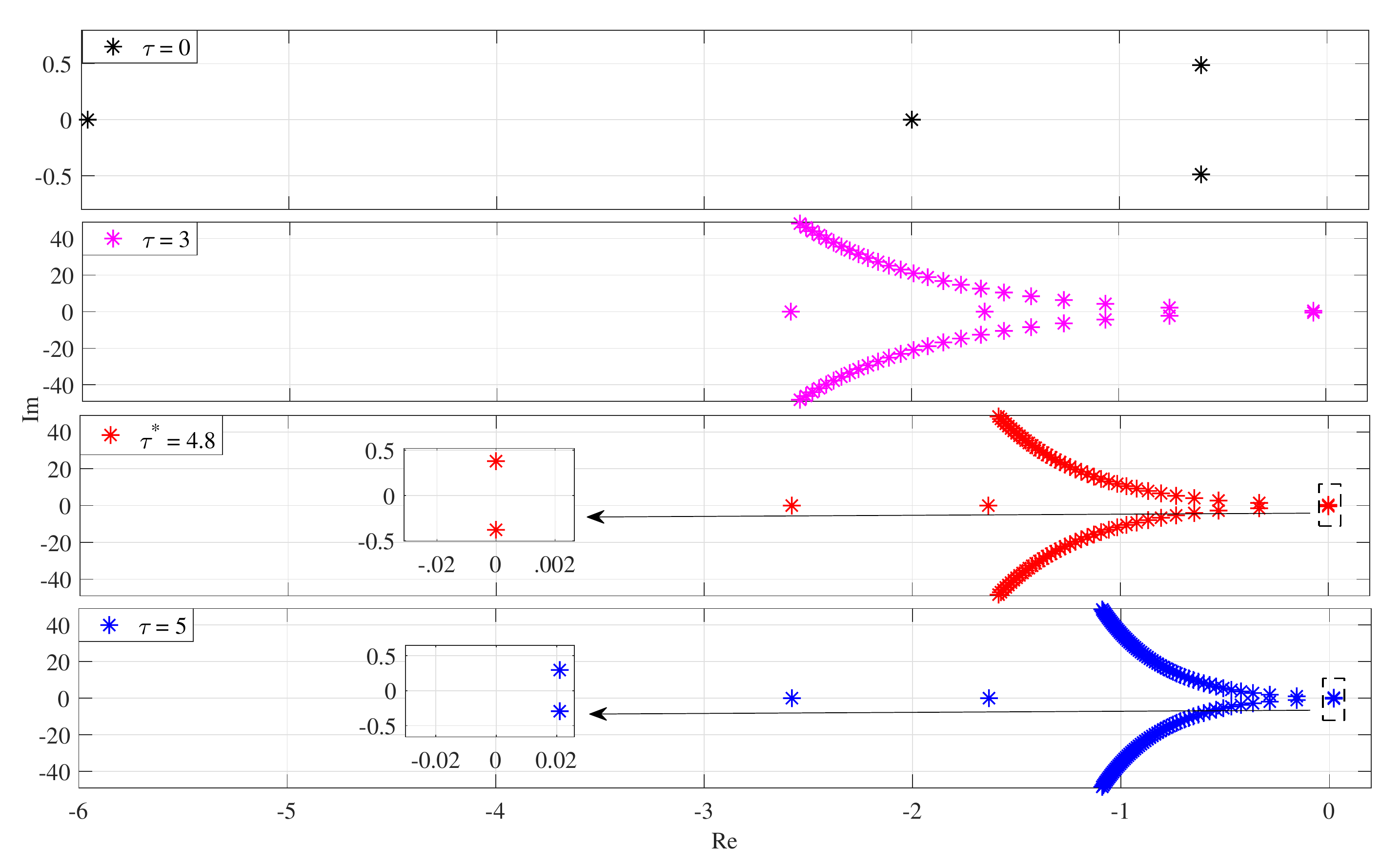}
 \caption{Delay-dependent: roots location of the quasi-polynomial (\ref{Q_simulation}) in the complex plane when $\mu=10$. }\label{RootsDelayDepend}
\end{figure}


\medskip
\noindent
On the other hand, the phase diagrams of the state variables in pairs $x_1$-$x_2$ and $z_1$-$z_2$ of Cournot duopoly nonlinear model with tax evasion and time-delay (\ref{NLS_simulation}) are presented in Figures~\ref{fig-Phase1} and \ref{fig-Phase2}. 
\begin{figure}
 \centering
 \includegraphics[scale=.43]{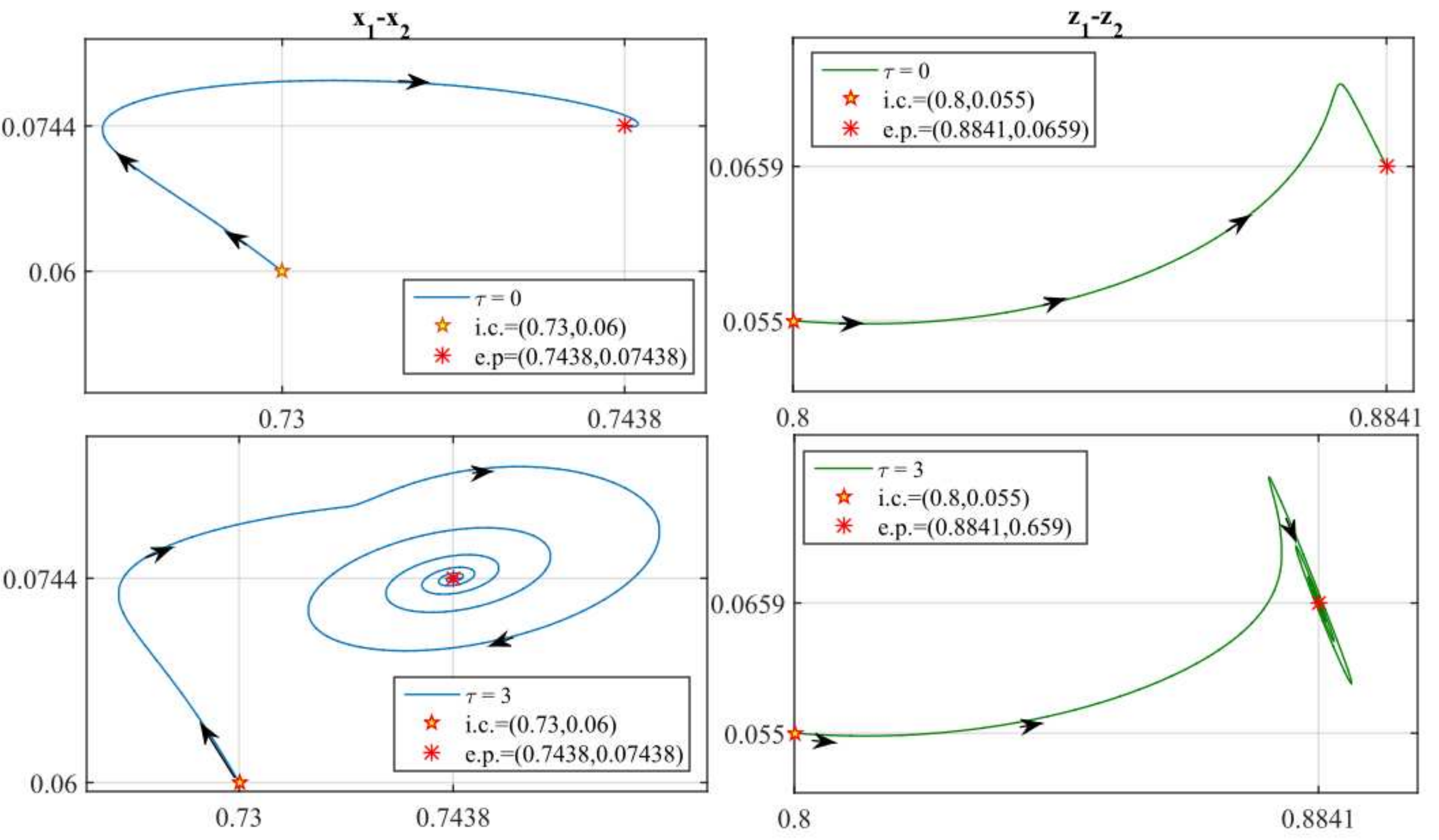}
 \caption{Phase diagram when $\mu = 10$ and $\tau=0$, $\tau=3$. }\label{fig-Phase1}
\end{figure}

\begin{figure}
 \centering
 \includegraphics[scale=.43]{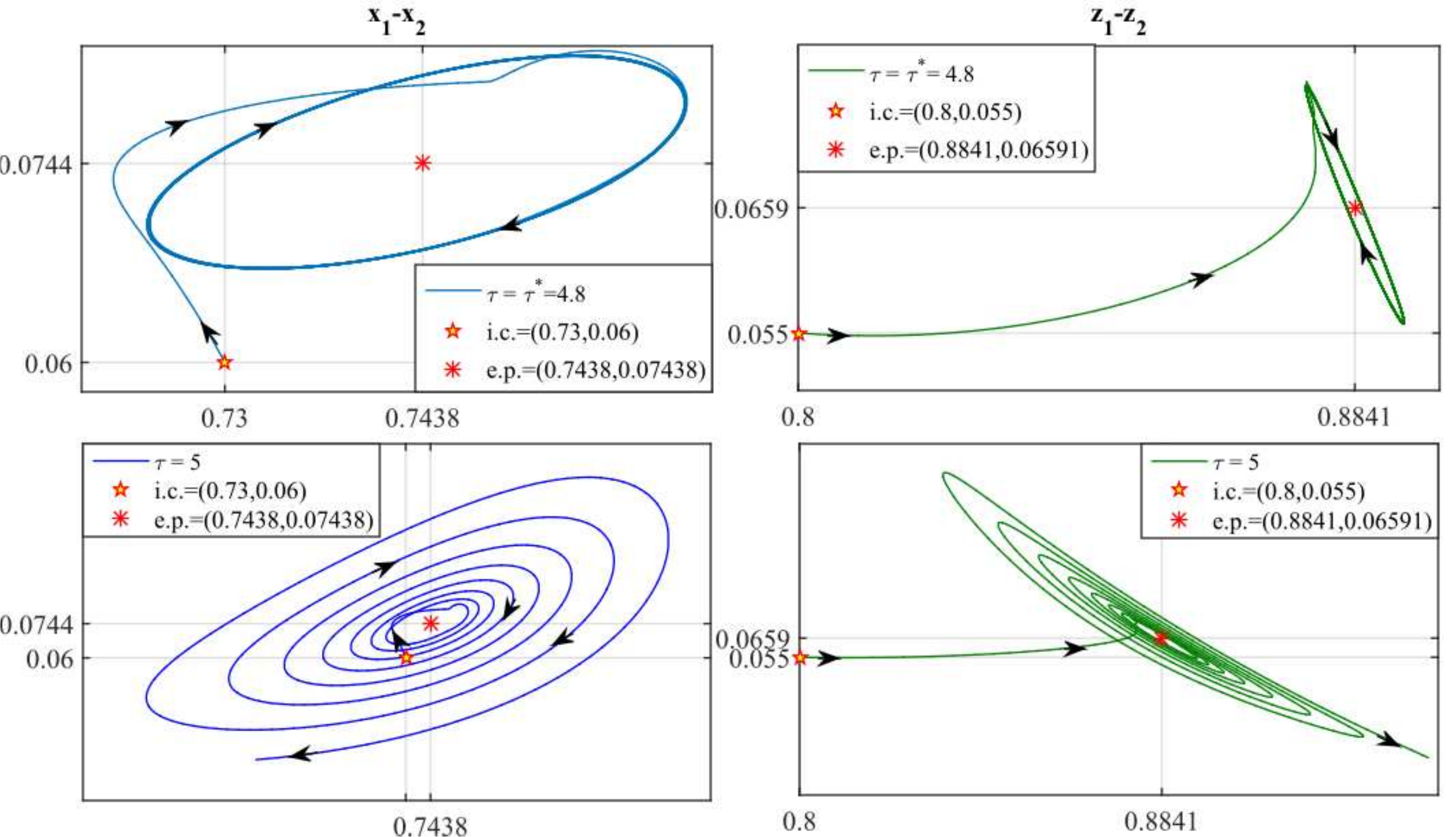}
 \caption{Phase diagram when $\mu = 10$ and $\tau=\tau^*$, $\tau=5$. }\label{fig-Phase2}
\end{figure}

\section{Conclusions}
In this paper a stability analysis of a  four-dimensional Cournot duopoly  model with tax evasion and time-delay in a continuous-time framework is presented. In the model analyzed, defined  as a gradient system,  two relevant parameters were detected, namely, the marginal cost rate $\mu$ and  the delay $\tau$  representing the time-delay of the second firm to enter the market after  the first firm. The parameter $\mu$ provides delay-independent and delay-dependent stability conditions. The delay-dependent stability conditions imply the existence of critical values $\tau=\tau^*$ for which the Cournot duopoly nonlinear model has limit cycles and Hopf bifurcations.

\bibliographystyle{amsplain}

\bibliography{DuopolioCNSNS}


\end{document}